\def\marginpar#1{\ignorespaces}
\newtheorem{thm}{Theorem}
\newtheorem{lem}[thm]{Lemma}
\newtheorem{cor}[thm]{Corollary}
\newtheorem{hyp}{Assumption}
\theoremstyle{definition}
\newtheorem{rmq}[thm]{Remark}
\newtheorem{ex}{Example}
\def\eqlabel#1{\def\@currentlabel{#1}}
\def\formula#1{\def\@tempa{#1}\let\@tempb\theequation\def\theequation{%
\hbox{#1}}\def\@currentlabel{(\theequation)}$$}
\def\endformula{\leqno\hbox{(\@tempa)}$$\@ignoretrue\let\theequation\@tempb}
\def\given{\hskip5\p@\relax\vrule\@width.4\p@\hskip5\p@\relax}
\newcommand{\open}[1]{%
\par\normalfont\topsep6\p@\@plus6\p@\trivlist\item[\hskip\labelsep\itshape#1%
\@addpunct{.}]\ignorespaces}
\DeclareRobustCommand{\close}[1]{%
  \ifmmode 
  \else \leavevmode\unskip\penalty9999 \hbox{}\nobreak\hfill
  \fi
  \quad\hbox{$#1$}}
\newlength{\toskip}\settowidth{\toskip}{(\theequation)}
\def \po {\left(}
\def \pf {\right)}
\def \co {\left[}
\def \cf {\right]}
\def \R {\mathbb R}
\def \dd {\text{d}}
\def\na {\nabla}
\begin{document}
\date{\today}

\title[Entropic convergence for Langevin diffusion]{Entropic multipliers method for Langevin diffusion and weighted log Sobolev inequalities.}

\author[P. Cattiaux]{\textbf{\quad {Patrick} Cattiaux $^{\spadesuit}$ \, \,  }}
\address{{\bf {Patrick} CATTIAUX}\\ Institut de Math\'ematiques de Toulouse, Universit\'e de Toulouse, CNRS UMR 5219, \\ 118 route de Narbonne, F-31062 Toulouse cedex 09.}
\email{patrick.cattiaux@math.univ-toulouse.fr}

\author[A. Guillin]{\textbf{\quad {Arnaud} Guillin $^{\diamondsuit}$ \, \, }}
\address{{\bf {Arnaud} GUILLIN}\\ Laboratoire de Math\'ematiques, CNRS UMR 6620, Universit\'e Blaise Pascal,
avenue des Landais, F-63177 Aubi\`ere.} \email{guillin@math.univ-bpclermont.fr}

\author[P. Monmarch\'e]{\textbf{\quad {Pierre} Monmarch\'e $^{\clubsuit}$ \,  }}
\address{{\bf {Pierre} MONMARCHE}\\  CERMICS (ENPC), CNRS UMR 8050\\	8 Avenue Blaise Pascal, 
77455 Marne-la-Vall\'ee.} \email{pierre.monmarche@ens-cachan.org}

\author[C. Zhang]{\textbf{\quad {Chaoen} Zhang $^{\diamondsuit}$ \, \, }}
\address{{\bf {Chaoen} ZHANG}\\ Laboratoire de Math\'ematiques, CNRS UMR 6620, Universit\'e Blaise Pascal,
avenue des Landais, F-63177 Aubi\`ere.} \email{zhang@math.univ-bpclermont.fr}

\maketitle

 \begin{center}

 \textsc{$^{\spadesuit}$  Universit\'e de Toulouse}
\smallskip

\textsc{$^{\diamondsuit}$ Universit\'e Blaise Pascal}
\smallskip

\textsc{$^{\clubsuit}$ Universit\'e Paris-Est, INRIA Paris}
\smallskip

\end{center}

\begin{abstract}
In his work about hypocercivity, Villani \cite{Villani} considers in particular convergence to equilibrium for the kinetic Langevin process. While his convergence results in $L^2$ are given in a quite general setting, convergence in entropy requires some boundedness condition on the Hessian of the Hamiltonian. We will show here how to get rid of this assumption in the study of the hypocoercive entropic relaxation to equilibrium for the Langevin diffusion. Our method relies on a generalization to entropy of the multipliers method and an adequate functional inequality. As a byproduct, we also give tractable conditions for this functional inequality, which is a particular instance of a weighted logarithmic Sobolev inequality, to hold.
\end{abstract}

\bigskip

\textit{ Key words : }Langevin diffusion, entropic convergence, hypocercivity, weighted logarithmic Sobolev inequality, Lyapunov conditions.
\bigskip


\section{Settings and main results.}

Let $U: \R^{d}\rightarrow \R$ be a smooth function such that $U\geq 1$ and $\int e^{-U(x)}dx$ is finite. $U$ will represent the confinement potential for the Hamiltonian $H(x,y)=U(x)+\frac{1}{2}|y|^2$ defined on $\R^{2d}$. The associated Boltzmann-Gibbs (probability) measure is given by
$$
\dd\mu  =  \frac{1}{Z} \; e^{-H(x,y)}\dd x \dd y
$$
where $Z$ is the normalizing constant $\int e^{-H(x,y)}\dd x \dd y$.\\ The Langevin dynamics associated to this measure is a flow of probability measures $\dd \mu_t = f_t \, \dd\mu$ for $t\geq 0$, where $f_t$ solves (at least in a weak sense) the Langevin equation  $$\partial_t f_t = L f_t \, ,$$ $L$ being given by
\begin{eqnarray}\label{EqGeneLangevin}
L   &=& -y.\na_x  +\po \na U(x)-y\pf.\na_y + \Delta_y \; .
\end{eqnarray}
We are thus interested in solutions belonging to $\mathbb L^1(\mu)$. Of course, the hypoelliptic regularity theorem ensures that $(t,x,y) \mapsto f_t(x,y)$ is smooth on $\mathbb R_+^*\otimes \R^{2d}$, whatever the regularity of $f_0$. It is then easy to see that mass and positivity are preserved so that if $f_0 \dd \mu$ is a probability measure so is $f_t \dd \mu$ for any $t\geq 0$. \\ The corresponding stochastic process is given by the S.D.E. $$
\left\{\begin{array}{l}
dx_t=y_tdt\\
dy_t=-y_tdt-\nabla U(x_t)dt+\sqrt{2}dW_t
\end{array}\right.
$$ where $(W_t)$ is an usual $d-$dimensional Wiener process. The infinitesimal generator of the process is thus $L^*= y.\na_x  -\po \na U(x)+y\pf.\na_y + \Delta_y$. The law $\mu$ is the unique invariant (but not reversible) probability measure for the process, and $\dd\mu_t=f_t \dd \mu$ is the distribution of the process at time $t$. One can also write down the P.D.E. satisfied by $\mu_t$ (or its density w.r.t. Lebesgue measure) which is usually called the kinetic Fokker-Planck equation. We denote by $P_t=e^{tL}$ the semi-group on $\mathbb L^1(\mu)$ with generator $(L,D(L))$, i.e. $f_t=P_tf_0$. 
\medskip

We are interested in the long time behavior of the Langevin diffusion. The usual ergodic theorem tells us that $\frac 1t \, \int_0^t \, \mu_s \, \dd s$ weakly converges to $\mu$ as $t$ grows to infinity. One can thus ask for the convergence of $f_t$ towards $1$ as $t$ goes to infinity. \\ This question has been investigated by many authors in recent years both in the PDE community and the probability community. One of the main difference is of course the way to look at this convergence:  total variation distance, $\mathbb L^2(\mu)$ norm, $\mathbb H^1(\mu)$ semi-norm, relative entropy, Wasserstein distance. Another associated problem is to get some bounds on the rate of convergence, once convergence holds true. Let's review some results in this direction.
\medskip

More or less at the same time, both probabilists and PDE specialists have considered the problem of the speed of convergence to equilibrium. Talay \cite{Tal02} and Wu \cite{Wu01} have built Lyapunov functions and using Meyn-Tweedie's approach have established (non quantitative) exponential convergence to equilibrium (see also \cite{BCG08} for this approach for kinetic models) under quite general assumptions. Desvillettes and Villani \cite{DV01} used an heavy Fourier machinery to established sub-exponential entropic convergence. Then H\'erau and Nier \cite{HN04} have carried out the spectral analysis of this equation and thus obtained a $\mathbb L^2$ exponential decay with quite sharp constants under general conditions. It has settled the bases for the theory of hypocercivity of Villani \cite{Villani} for the $\mathbb L^2$ and the entropic convergence to equilibrium, when $\text{Hess}(U)$ is bounded in the entropic case, see also \cite{MMS15} for a version without regularity issues. Finally, and quite recently, coupling approaches, using synchronous coupling or coupling by reflection (see \cite{BGM10} or \cite{EGZ16,EGZ17}) have established exponential convergence to equilibrium in Wasserstein distance with sharp constants, once again when $\text{Hess}(U)$ is bounded.

\bigskip

 \noindent As we will adopt the terminology and adapt the methodology of hypocoercivity as in Villani \cite{Villani}, let us describe a little bit further the formalism of this setting. Recall that the variance of a squared integrable function $g$ with respect to $\mu$ is defined by $$\text{Var}_\mu(g):=\int g^2\dd\mu-\left(\int g\dd\mu\right)^2=\int \left(g-\int g\dd\mu\right)^2\dd\mu$$ while the entropy is defined for  positive functions by $$\text{Ent}_\mu(f):=\int f\ln f\dd\mu-\int f\dd\mu \ln \int f\dd\mu \; .$$ The law $\mu$ is said to satisfy a Poincar\'{e} inequality if there exists a positive constant $C_P$ such that for all smooth functions $g$
\[
\text{Var}_\mu(g) \leq C_P \int |\nabla g|^2 \dd\mu \, .
\]
Similarly, $\mu$ satisfies a logarithmic Sobolev (or log-Sobolev in short) inequality if there exists a constant $C_{LS}$ such that for all smooth functions $g$, 
\[
\text{Ent}_\mu(g^2) \leq C_{LS} \int |\nabla g|^2 \dd\mu \, .
\]
The natural $\mathbb H^1_\mu$ semi-norm is defined as $||g||_{H^1_\mu}:=||\nabla g||_{\mathbb L^2_\mu}$. Exponential convergence of $P_tf_0$ to $1$ in $\mathbb H^1_\mu$ and variance was proved by Villani \cite{Villani} under two conditions: 
\begin{enumerate}
\item[(1-var)]
\quad $ |\nabla^2 U|\leq c \; (1+|\nabla U|)$;
\item[(2-var)] \quad $e^{-U(x)}\dd x$ satisfies a Poincar\'{e} inequality.
\end{enumerate}
Remark that (2-var) is equivalent to the fact that $\mu$ satisfies a Poincar\'e inequality, thanks to the tensorization property of the latter, since the gaussian measure satisfies a Poincar\'e inequality.
\medskip

\noindent For convergence in entropy, the assumptions made by Villani are much stronger: 
\begin{enumerate}
\item[(1-ent)] \quad $\nabla^2 U$ is bounded; \item[(2-ent)] \quad  $e^{-U(x)} \, \dd x$ satisfies a log -Sobolev inequality.
\end{enumerate}
Again, (2-ent) is equivalent to the fact that $\mu$ satisfies a log-Sobolev inequality, thanks to a similar argument of tensorization.\\ When both these assumptions are satisfied, Villani showed that, for any initial probability density $f_0$ with finite moments of order 2, the entropy of $P_tf_0$ converges to $0$ exponentially fast (see Villani \cite{Villani} Theorem 39). 
\medskip

\noindent Our main goal in this paper is to get rid of the boundedness assumption (1-ent) for $\nabla^2 U$, replacing it by 
\begin{hyp}\label{HypB} 
there exists $\eta \geq 0$ such that $U^{-2\eta}\na^2 U$ is bounded.
\end{hyp}
\noindent A typical situation where Assumption \ref{HypB} is satisfied is when both $U$ and $\nabla^2 U$ have polynomial growth at infinity, i.e. $U(x)\geq c_1 \, (1+|x|)^l$ and $|\na^2U|\leq c_2 \, (1+|x|)^j$ so that we may choose $\eta \geq \frac{j}{2l}$. In particular if $j=l-2\geq 0$ as it is the case for true polynomials of degree at least 2, we may choose $\eta = \frac12-\frac{1}{l}$.
\medskip

\noindent The counterpart is that we have to reinforce (2-ent) replacing it by the stronger
\begin{hyp}\label{HypU}
$\mu$ satisfies the following weighted log-Sobolev inequality: there exists $\rho>0$ s.t. for all smooth enough $g$ with $\int g^2 \, \dd \mu=1$:
\begin{equation}\label{eqlspoids}
\mbox{\rm Ent}_\mu(g^2) \leq  \rho \int  (H^{-2\eta}|\na_x g|^2 + |\na_y g|^2)\dd \mu.
\end{equation}
\end{hyp}

\noindent Once both Assumptions \ref{HypB} and \ref{HypU} are satisfied, we can prove exponential decay in entropy for the Langevin diffusion. Our approach is based on the multiplier method. More precisely we will prove the following:
\begin{thm}\label{ThmHypocoPoids}
Under Assumptions \ref{HypB} and \ref{HypU}, let
\begin{eqnarray*}
\lambda & = & \po \| H^{-2\eta}\na^2 U\|_{\infty} + 2\pf^2\, ,\\
\kappa & = & \frac1{1300\po  \eta+d\pf^4 } \, .
\end{eqnarray*}
Then for all initial probability density $f$,
$$
\mbox{\rm Ent}_\mu(P_t f) \leq  \exp\po{-\frac{\kappa}{1+4\lambda\rho}\int_0^t (1-e^{-s})^2\dd s}\pf \mbox{\rm Ent}_\mu(f) \, .
$$
\end{thm}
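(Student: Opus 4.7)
\textbf{Proof plan for Theorem \ref{ThmHypocoPoids}.}

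The strategy is to follow Villani's hypocoercivity scheme in entropy, but with two modifications: (i) the $x$-Fisher information must be \emph{weighted} by $H^{-2\eta}$ so that the troublesome commutator $[\nabla_x,L]$, which produces $\nabla^2 U\cdot\nabla_y$, can be tamed by Assumption \ref{HypB}; (ii) the coefficients of the modified entropy are taken time-dependent (the ``multipliers'' of the title), which is what will eventually produce the factor $(1-e^{-s})^2$ in the exponential rate. Concretely, I would introduce a functional of the form
\begin{equation*}
\mathcal{F}_t(f) \;=\; \mbox{\rm Ent}_\mu(f) \;+\; a(t)\!\int\!\frac{|\nabla_y f|^2}{f}\,\dd\mu \;+\; b(t)\!\int\!\frac{\nabla_x f\cdot\nabla_y f}{f}\,\dd\mu \;+\; c(t)\!\int\! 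H^{-2\eta}\,\frac{|\nabla_x f|^2}{f}\,\dd\mu,
\end{equation*}
with $a(0)=b(0)=c(0)=0$ so that $\mathcal{F}_0(f)=\mbox{\rm Ent}_\mu(f)$, and $a,b,c$ behaving like suitable powers of $(1-e^{-t})$.

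The next step is to compute $\frac{d}{dt}\mathcal{F}_t(P_tf)$ using the decomposition $L=A+S$, where $A=-y\cdot\nabla_x+\nabla U\cdot\nabla_y$ is antisymmetric and $S=\Delta_y-y\cdot\nabla_y$ is symmetric in $\mathbb L^2(\mu)$. The entropy part gives the exact dissipation $-\int |\nabla_y P_tf|^2/P_tf\,\dd\mu$. For each of the three Fisher-type terms one differentiates in time, uses the $\Gamma_2$ calculus and integration by parts against $\mu$. The crucial identities come from the commutators
\begin{equation*}
[\nabla_y,L]=-\nabla_x-\nabla_y,\qquad [\nabla_x,L]=\nabla^2 U\cdot\nabla_y,
\end{equation*}
the second of which is precisely what forces the weight $H^{-2\eta}$: in the evolution of $\int H^{-2\eta}|\nabla_x f|^2/f\,\dd\mu$, the Hessian factor arising from the commutator is pre-multiplied by $H^{-2\eta}$, and Assumption \ref{HypB} controls $\|H^{-2\eta}\nabla^2 U\|_\infty$. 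Additional terms come from differentiating the weight $H^{-2\eta}$ itself, and will contribute the $\eta$-dependent constant in $\kappa$.

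The output of these computations is an inequality of the schematic form
\begin{equation*}
\tfrac{d}{dt}\mathcal{F}_t(P_tf) \;\leq\; -\,D(t)\int\!\Big(|\nabla_y P_tf|^2 + c(t)H^{-2\eta}|\nabla_x P_tf|^2\Big)\frac{\dd\mu}{P_tf} \;+\; \textrm{(cross terms)},
\end{equation*}
where the cross terms are absorbed by Cauchy--Schwarz/Young into the good diagonal terms, provided $a(t),b(t),c(t)$ are tuned so that the associated quadratic form in $(b,c)$ stays positive definite and dominated by $a$ on the diagonal. This is the most delicate step: one has to pick $a,b,c$ (roughly $a(t)\sim(1-e^{-t})$, $b(t)\sim(1-e^{-t})^2$, $c(t)\sim(1-e^{-t})^3$, with the precise powers chosen to match the orders of the leading error terms) so that all bad contributions, including the $\lambda$-bound from the Hessian and the $\eta$-constant from $\nabla H^{-2\eta}$, are absorbed, yielding the explicit constants $1300\,\eta$ and $d^4$ in $\kappa$.

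Finally, applying Assumption \ref{HypU} to $g=\sqrt{P_tf}$ converts the weighted Fisher quantity on the right into $\frac{1}{\rho}\mbox{\rm Ent}_\mu(P_tf)$, and since $\mathcal{F}_t(P_tf)\leq (1+4\lambda\rho)\mbox{\rm Ent}_\mu(P_tf)$ (via a log-Sobolev-type comparison between the modified and plain entropy, using the $\lambda$ bound on the Hessian to control the Fisher terms by the entropy), one obtains
\begin{equation*}
\tfrac{d}{dt}\mathcal{F}_t(P_tf) \;\leq\; -\,\frac{\kappa\,(1-e^{-t})^2}{1+4\lambda\rho}\,\mathcal{F}_t(P_tf).
\end{equation*}
Gronwall's lemma and the bound $\mathcal{F}_0 = \mbox{\rm Ent}_\mu(f)$ together with $\mathcal{F}_t(P_tf)\geq\mbox{\rm Ent}_\mu(P_tf)$ then give the announced decay. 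The hardest part, without question, is the bookkeeping of step three: balancing the seven or eight error terms (commutator, weight derivative, two friction terms in $\nabla_y$, three cross terms) by the right choice of time-dependent multipliers so that no ``bad'' contribution exceeds the available ``good'' $\Gamma_2$-type dissipation.
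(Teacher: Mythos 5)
Your overall strategy — a time-dependent twisted entropy with a weighted Fisher-type quadratic form, commutator calculus for the derivative, absorption of errors, and Gronwall — is indeed the one the paper uses (via Lemma \ref{LemGammaPoids}). However, the specific weight structure you propose for the functional breaks the argument in a way that is not just bookkeeping.

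You place the weight $H^{-2\eta}$ only on the $|\nabla_x f|^2$ term, leaving the cross term $b(t)\int \nabla_x f\cdot\nabla_y f / f\,\dd\mu$ and the $|\nabla_y f|^2$ term unweighted. Two things go wrong. First, the associated matrix
$\bigl(\begin{smallmatrix} c(t)H^{-2\eta} & b(t)/2 \\ b(t)/2 & a(t)\end{smallmatrix}\bigr)$
has determinant $a(t)c(t)H^{-2\eta}-b(t)^2/4$, which is negative for large $H$ whenever $b(t)\neq 0$ and $\eta>0$; so your $\mathcal{F}_t$ is \emph{not} bounded below by $\mbox{\rm Ent}_\mu$ and the final step ($\mathcal{F}_t\ge\mbox{\rm Ent}_\mu(P_tf)$, Gronwall) collapses. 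Second, and decisively, differentiating the unweighted cross term produces, via $[\nabla_x,L]=\nabla^2U\cdot\nabla_y$, a contribution of the form $b(t)\int (\nabla^2U\,\nabla_y f)\cdot\nabla_y f / f\,\dd\mu$ with \emph{no} $H^{-2\eta}$ factor attached, so Assumption~\ref{HypB} cannot tame it and you are back to requiring $\nabla^2U$ bounded — i.e.\ Villani's (1-ent), the very hypothesis the theorem is designed to remove. The paper avoids both problems by taking a matrix $M$ whose entries are $a=\varepsilon^3\alpha^3H^{-3\eta}$ (for $|\nabla_xf|^2$), $b=\varepsilon^2\alpha^2H^{-2\eta}$ (cross), $c=2\varepsilon\alpha H^{-\eta}$ (for $|\nabla_yf|^2$); this is positive semi-definite by construction (it is a sum of two squares) and every occurrence of $\nabla^2U$ in $N$ comes pre-multiplied by $aH^{-\eta}$ or $bH^{-2\eta}$, which is exactly what Assumption~\ref{HypB} controls. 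The sharper $H^{-3\eta}$ and $H^{-\eta}$ on the diagonal are forced by this $H^{-2\eta}$ on the off-diagonal, not optional.

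One smaller inaccuracy: the closing comparison is not $\mathcal{F}_t(P_tf)\le(1+4\lambda\rho)\mbox{\rm Ent}_\mu(P_tf)$. In the paper one bounds $G(t)$ \emph{above} by a multiple of the weighted Fisher information $I(t):=\int\Psi''(P_tf)(H^{-2\eta}|\nabla_xP_tf|^2+|\nabla_yP_tf|^2)\dd\mu$ (using $F(t)\le 3\varepsilon\alpha I(t)$ and the weighted log-Sobolev $\int\Psi(P_tf)\dd\mu\le\rho\,I(t)$), and since $G'(t)\le -\tfrac{\alpha^2\varepsilon^2}{4\lambda}I(t)$ this yields a differential inequality $G'\le -\tfrac{\alpha^2\varepsilon^2}{1+4\lambda\rho}G$; the initial condition $G(0)=\mbox{\rm Ent}_\mu(f)$ follows because the multipliers vanish at $t=0$, and the lower bound $\mbox{\rm Ent}_\mu(P_tf)\le G(t)$ comes from positive semi-definiteness of $M$ — which, again, is the point your choice of weights fails to guarantee.
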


\noindent Section 2 is devoted to the proof of this theorem which contains Villani's result in the case $\eta=0$. The key idea is to use a twisted gradient depending on time, see lemma \ref{LemGammaPoids}. An important aspect of our result is that the bounded Hessian condition in Villani's approach is relaxed as Assumption 1. In fact it was a major issue raised by Villani \cite{Villani} concerning the entropic convergence.  Indeed, his $L^2$ multiplier method, at the basis of the entropic hypocercivity, does  not rely on a Poincar\'e inequality but on Brascamp-Lieb inequality. It was thus thought that for the multiplier method to hold for entropy, an entropic Brascamp-Lieb inequality was needed. However Bobkov-Ledoux \cite{BL00} proved that this inequality is false in general, and true in very particular setting. Our strategy is then to show that it is not an entropic Brascamp-Lieb inequality that we need but a particular weighted logarithmic Sobolev inequality. Note also that a first attempt to skip the boundedness assumption for the Hessian is contained in \cite{BCG08} Theorem 6.10, but the statement therein is much weaker than the one of the present theorem and most importantly not at all quantitative .
\medskip

Next we shall show that, similarly to the non weighted case studied in \cite{CG16} (see also \cite{BBCG,CGWW09}), the weighted log Sobolev inequality in Assumption \ref{HypU} is equivalent to some Lyapunov type condition. \\ To this end we introduce the natural second order operator $$L_{\eta}:=H^{-2\eta} \Delta_x +  \Delta_y - H^{-2\eta}\left(2\eta\frac{\nabla_x H}{H}+\nabla_x H\right).\nabla_x - \nabla_y H.\nabla_y \, ,$$ which is symmetric in $\mathbb L^2_\mu$ and satisfies
\begin{equation}\label{eqIPP}
\int \, f \, L_\eta g \, \dd \mu = - \, \int \, (H^{-2\eta} \, \nabla_x f.\nabla_x g+ \nabla_y f.\nabla_y g) \, \dd \mu \, .
\end{equation}
\begin{thm}\label{Thm-LyapunovCondition}
Assume that $U$ goes to infinity at infinity, that $|\nabla H|\ge h>0$ outside some large ball. Denote $A_r:=\{(x,y): H(x,y)\leq r\}$, and
\[
\theta(r)=\sup\limits_{z\in \partial A_r} \max\limits_{i,j=1,...,2d} |\frac{\partial^2 H}{\partial z_i\partial z_j} |
\] 

Assume that $\theta(r)\leq ce^{C_0r}$ with some positive constants $C_0$ and $c$  for $r$ sufficiently large. Assume that there exists a Lyapunov function $W$ with $W(x)\ge w>0$ for all $(x,y)$ and some $\lambda,b>0$ satisfying
$$L_{\eta} W(x,y)\le -\lambda H(x,y)\, W(x,y)+b\, . $$
Then $\mu$ verifies a weighted logarithmic Sobolev inequality \eqref{eqlspoids}.
\end{thm}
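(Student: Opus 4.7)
The plan is to follow the general strategy of Bakry--Barthe--Cattiaux--Guillin and Cattiaux--Guillin--Wu for turning a Lyapunov condition into a functional inequality: first extract a global weighted integrability bound from the Lyapunov estimate, then combine it with a local log-Sobolev inequality on compact level sets of $H$.

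The first step is algebraic. Since $W \geq w > 0$, one rewrites the Lyapunov assumption as $-L_\eta W / W \geq \lambda H - b/w$. Multiplying by $g^2$, integrating against $\mu$, and using the symmetry formula \eqref{eqIPP} with the choice $f = g^2/W$, one gets
$$\int \frac{-L_\eta W}{W}\, g^2\, \dd\mu \;=\; \int \Gamma_\eta(g^2/W, W)\, \dd\mu,$$
where $\Gamma_\eta(\phi,\psi) := H^{-2\eta}\, \na_x \phi \cdot \na_x \psi + \na_y \phi \cdot \na_y \psi$ is the carr\'e-du-champ attached to \eqref{eqIPP}. Expanding the gradients and applying Cauchy--Schwarz on both components yield the pointwise inequality $\Gamma_\eta(g^2/W, W) \leq \Gamma_\eta(g, g)$, so that
$$\lambda \int H g^2\, \dd\mu \;\leq\; \int \bigl(H^{-2\eta}|\na_x g|^2 + |\na_y g|^2\bigr)\, \dd\mu \;+\; \frac{b}{w}\int g^2\, \dd\mu. \quad (\star)$$

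The second step is to produce a local log-Sobolev inequality on the level set $A_r$ for $r$ large. Since $|\nabla H| \geq h$ outside some ball and $H \to \infty$, the set $A_r$ is compact with smooth boundary. On $A_r$ the density $e^{-H}/Z$ is bounded above and below, $\mathrm{Hess}\, H$ is controlled by $\theta(r) \leq c e^{C_0 r}$, and $H^{-2\eta} \geq r^{-2\eta}$. Combining a Bakry--\'Emery inequality for a uniformly convex modification of $H$ on $A_r$ with a Holley--Stroock bounded perturbation, one obtains a log-Sobolev inequality for the normalized restriction $\mu_{|A_r}$ with respect to the unweighted Dirichlet form, and hence for the weighted form, with local constant $C_{\mathrm{loc}}(r) \lesssim r^{2\eta} e^{c' r}$ where $c'$ depends on $C_0$ and $h$.

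For the assembly, I apply the local inequality to $\chi_r g$, where $\chi_r$ is a smooth cutoff supported in $A_{2r}$ and equal to $1$ on $A_r$, use Rothaus' lemma to handle the mean, and control the tail via Chebyshev $\int_{A_r^c} g^2\, \dd\mu \leq r^{-1}\int H g^2\, \dd\mu$ together with $(\star)$. Choosing $r$ large enough yields a defective weighted log-Sobolev
$$\mathrm{Ent}_\mu(g^2) \;\leq\; A \int \bigl(H^{-2\eta}|\na_x g|^2 + |\na_y g|^2\bigr)\, \dd\mu \;+\; B \int g^2\, \dd\mu,$$
which is tightened by the classical Rothaus--Deuschel--Stroock argument, using the weighted Poincar\'e inequality obtained from $(\star)$ applied to $g - \int g\, \dd\mu$ (together with the pointwise bound $H \geq 1$). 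The main obstacle is the balance in this last step: $C_{\mathrm{loc}}(r)$ grows at most like $e^{c' r}$ while the Chebyshev tail gain is only $1/r$, so the optimization over $r$ barely closes, and the exponential bound $\theta(r) \leq c e^{C_0 r}$ is precisely the worst Hessian growth compatible with this scheme.
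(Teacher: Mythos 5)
Your step $1$ is correct and is exactly the content of the paper's Lemma \ref{lem52}: the integration-by-parts plus Cauchy--Schwarz computation $\Gamma_\eta(g^2/W,W)\leq\Gamma_\eta(g,g)$ and the resulting bound $(\star)$ are precisely what the paper establishes. Your step $3$ (cutoff, Rothaus, Chebyshev tail) is a legitimate alternative to the paper's route via a weighted super-Poincar\'e inequality, though less clean for entropy since entropy does not split over a partition of the domain the way $\int f^2\,\dd\mu$ does.

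The genuine gap is step $2$. First, Bakry--\'Emery does not produce a log-Sobolev inequality for the \emph{restricted} measure $\mu_{|A_r}$: it gives LSI for a measure on the whole space, and LSI does not pass to restrictions without convexity of $A_r$ or an extension argument, neither of which is available here. Second, even setting aside the restriction issue, the constant you claim is far too optimistic. To force a uniformly convex modification of $H$ on $A_r$ you must add roughly $\theta(r)|z|^2$; since the hypothesis $|\nabla H|\geq h$ only gives $\mathrm{diam}(A_r)\lesssim r/h$, the oscillation of that correction on $A_r$ is of order $\theta(r)\,r^2\sim r^2 e^{C_0 r}$, and Holley--Stroock then produces a factor $e^{r^2 e^{C_0 r}}$ — \emph{doubly} exponential in $r$, not $e^{c'r}$. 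With that constant the Chebyshev $1/r$ tail gain cannot close the argument. The paper avoids this trap entirely: instead of a local LSI for $\mu_{|A_r}$ obtained by convexification, it invokes a Nash (super-Poincar\'e) inequality for the \emph{Lebesgue} measure on $A_r$ (cited from Prop.~3.8 of [CGWW09]), whose constant depends only on the boundary regularity of $A_r$ — controlled by $\theta(r)$ on $\partial A_r$ and $|\nabla H|\geq h$ — and not on any convexity of $H$. Converting from Lebesgue to $\mu$ on $A_r$ costs only the bounded density factor $e^r$, and one assembles a genuine super-Poincar\'e inequality for $\mu$ with the exponential parameter profile that is equivalent to log-Sobolev. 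To repair your scheme you would need to replace the Bakry--\'Emery step by such a Nash-type local inequality, at which point you would essentially recover the paper's proof.
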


Remark that the condition $\theta(r)\le ce^{{C_0}r}$ is trivially verified when both $U$ and $\mbox{\rm Hess}(U)$ have a polynomial growth. Also, a Lyapunov function exists if $U$ satisfies the conditions in the following corollary:
\begin{cor}\label{Cor-Lyapunov}Assume that the following conditions hold outside a compact domain
\begin{enumerate}
  \item $\Delta_x U\leq \kappa |\nabla_x U|^2$  for some $\kappa\in (0,1)$;
  \item a growth condition: $|\nabla_x U|^2 \geq c U^{2\eta+1}$ for some positive constant $c$.
\end{enumerate}
Then $d\mu=\frac{1}{Z}e^{-H(x,y)}dxdy$ satisfies a weighted logarithmic Sobolev inequality.

Moreover, if we assume that $U^{-2\eta}\nabla^2 U$ is bounded, then we may apply Theorem \ref{ThmHypocoPoids}.
\end{cor}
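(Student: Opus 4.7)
The strategy is to verify the Lyapunov hypothesis of Theorem \ref{Thm-LyapunovCondition} by choosing
\[
W(x,y) \;:=\; \exp\bigl(\gamma\, H(x,y)\bigr)
\]
for a constant $\gamma\in(0,1-\kappa)$ to be fixed later. Since $H\ge 1$, this $W$ is bounded below by $e^{\gamma}>0$, so the positivity requirement $W\ge w>0$ comes for free. It then remains to establish a drift inequality $L_\eta W\le -\lambda H W+b$ globally.

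A straightforward calculation, using $\nabla_x H=\nabla_x U$ and $\nabla_y H=y$, yields
\[
L_\eta W \;=\; \gamma W\,\Bigl\{\, H^{-2\eta}\bigl[\Delta_x U+(\gamma-1)|\nabla_x U|^2-2\eta H^{-1}|\nabla_x U|^2\bigr] + d + (\gamma-1)|y|^2\,\Bigr\}.
\]
Outside the compact set on which (1) and (2) are assumed to hold, assumption (1) gives $\Delta_x U+(\gamma-1)|\nabla_x U|^2\le -(1-\kappa-\gamma)|\nabla_x U|^2$; fixing e.g.\ $\gamma=(1-\kappa)/2$ keeps the coefficient $(1-\kappa-\gamma)$ strictly positive, and the terms $-2\eta H^{-2\eta-1}|\nabla_x U|^2$ and $(\gamma-1)|y|^2$ are nonpositive.

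The main obstacle---and the reason (2) is formulated with the exponent $2\eta+1$---is to extract a factor of $H$ from the negative part of the bound. I would handle this by splitting $\mathbb R^{2d}$ into the two regions $\{|y|^2\le U\}$ and $\{|y|^2>U\}$. In the first, $H\le \tfrac32 U$, so $U\ge \tfrac23 H$, and assumption (2) yields
\[
H^{-2\eta}|\nabla_x U|^2 \;\ge\; c\,U\,(U/H)^{2\eta} \;\ge\; c\,(2/3)^{2\eta+1}\,H,
\]
so the $x$-term produces a negative multiple of $H W$. In the second region, $H<\tfrac32 |y|^2$, i.e.\ $|y|^2\ge \tfrac23 H$, and the kinetic-energy contribution $(\gamma-1)|y|^2$ already produces a negative multiple of $H W$. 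Combining the two cases yields $L_\eta W\le -\lambda_0\, H\, W + \gamma d\, W$ outside the compact domain, and since $H W$ and $L_\eta W$ are continuous, their difference on the compact remainder can be absorbed into a constant $b$, giving the required drift inequality on all of $\mathbb R^{2d}$.

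Finally, Theorem \ref{Thm-LyapunovCondition} then delivers the weighted log-Sobolev inequality \eqref{eqlspoids}, which is Assumption \ref{HypU}. For the moreover part, the additional hypothesis that $U^{-2\eta}\nabla^2 U$ is bounded is exactly Assumption \ref{HypB}, so Theorem \ref{ThmHypocoPoids} applies. The auxiliary hypotheses of Theorem \ref{Thm-LyapunovCondition} (namely $U\to\infty$ at infinity, $|\nabla H|\ge h$ off a large ball, and $\theta(r)\le ce^{C_0 r}$) follow easily from the growth assumption (2) and, in the moreover case, from the polynomial growth of $\nabla^2 U$, so only a brief verification is needed.
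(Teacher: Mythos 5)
Your proof is correct and follows essentially the same route as the paper: build an exponential Lyapunov function for $L_\eta$, use condition (1) to kill the $\Delta_x U$ term, and split $\mathbb R^{2d}$ into two regions to extract a factor of $H$ from either the $x$-energy term via condition (2) or the $y$-energy term. The only cosmetic difference is that the paper takes $W=e^{\alpha U+\frac{\beta}{2}|y|^2}$ with two free parameters $\alpha,\beta\in(0,1)$ and thresholds the split at $|y|^2\ge H$ rather than $|y|^2\ge U$, while you take the special case $\alpha=\beta=\gamma$ (i.e.\ $W=e^{\gamma H}$); both choices work.
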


The next section will present the proof of Theorem 1, where the entropic multipliers method is presented. In Section 3, the treatment via Lyapunov condition of weigthed log-Sobolev inequality, i.e. Theorem 2 and Corollary 3, is done.
The final section discusses some additional points on weighted inequalities. Indeed, the proof of weighted Poincar\'e inequality used by Villani relies solely on some Poincar\'e inequality for each measure and adapt the usual argument of tensorization using heavily the orthogonality inherited from the $\mathbb L^2_\mu$ structure. However, in the entropic case, {from a log-Sobolev for each marginal, we are only able to recover a weaker inequality for the product measure.}
\bigskip


\section{Proof of Theorem \ref{ThmHypocoPoids}.}
This section is devoted to the proof of Theorem \ref{ThmHypocoPoids}. Actually we will prove a more general statement. Consider an admissible function $\Psi$, that is $\Psi\in C^4$ and $\frac{1}{\Psi''}$ is positive concave, as in \cite{Mon15b}. \\ Theorem \ref{ThmHypocoPoids} corresponds to $$\Psi: \R^+\rightarrow \R, u \mapsto u\ln u+1-u \, ,$$ while the $\mathbb L^2_\mu$ case corresponds to $\Psi(u)=(u-1)^2$. We also define $\psi= \Psi''$. 

{We only consider the case where $f_0$ is bounded away from zero. Indeed, if it is not the case, writing $g_0 = (1-\delta) f_0 + \delta$ for some $\delta>0$, then we may prove Theorem \ref{ThmHypocoPoids} for $g_t = (1-\delta) f_t + \delta$ and let $\delta$ go to zero to recover the result for $f_t$.} 
\medskip

In this general framework we replace the weighted log-Sobolev inequality in Assumption \ref{HypU} by the following, satisfied for any bounded density of probability $f$,
\begin{equation}\label{assump2}
\int \, \Psi(f) \, \dd \mu \leq \rho \, \int \, \psi(f) \, \left(H^{-2\eta}|\nabla_x f|^2 + |\nabla_y f|^2 \right)\dd \mu.
\end{equation}
We shall obtain the analogue of Theorem \ref{ThmHypocoPoids}, replacing the entropy by $\int \Psi(f) \dd \mu$, i.e. if \eqref{assump2} and assumption \ref{HypB} are satisfied, then for all initial probability density $f$,
\begin{equation}\label{convpsi}
\Psi(P_t f) \leq  \exp\po{-\frac{\kappa}{3+\lambda\rho}\int_0^t (1-e^{-s})^2\dd s}\pf \Psi(f) \, .
\end{equation}
\bigskip

\noindent The key point of the proof is to introduce a time {and space}-dependent twisted gradient. Consider $r\in\mathbb N$ and for $0 \leq i \leq r$, $x\mapsto b_i(x)\in\R^d$ a smooth vector field, $C_i = b_i.\na$, $Cf=(C_0 f,\dots,C_r f)$,  
 $t,x \mapsto M_t(x)$  a smooth function from $\R_+\times\R^{d}$ to $\mathcal M_{r\times r}^{sym+}(\R)$ the set of positive semi-definite symmetric real matrices of size $r$, and
\begin{eqnarray*}
F(t) &=& \int \psi(P_t f) \po C P_t f\pf^T M_t C P_t f \dd \mu
\end{eqnarray*}
where $A^T$ stands for the transpose of the matrix  $A$ and vectors are seen as 1-column matrices. The following results holds for any diffusion operator:
\begin{lem}\label{LemGammaPoids}
Let $L=L_s+L_a$, where $L_s=\frac12(L+L^*)$ and $L_a=\frac12(L-L^*)$ stand for the symmetric and antisymmetric part of $L$ in $\mathbb L^2_\mu$. Then
\begin{eqnarray*}
F'(t) &\leq& \int \psi(P_t f) \po C P_t f\pf^T \Big(2 M_t   \co C,L\cf   + \po (2L_s-L)M_t+\partial_t M_t\pf  C  \Big) P_t f \dd \mu.
\end{eqnarray*}
where $\co C_i,L\cf = C_i L-L C_i $ is the (generalized) Lie bracket of $C_i$ and $L$ and $\co C,L \cf = \po \co C_0,L\cf ,\dots,\co C_r,L\cf \pf$.
\end{lem}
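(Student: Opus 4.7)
The plan is to differentiate $F(t)$ directly in time, then use integration by parts (based on the $\mu$-invariance of $L$) to transfer the action of $L$ from terms involving $v$ and $\psi(g)$ onto the matrix $M_t$, which is precisely how the adjoint operator $L^{\ast}=2L_s-L$ appears in the stated bound. The leftover terms form a carré-du-champ type quadratic form whose non-negativity rests on the admissibility hypothesis on $\Psi$.

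Write $g=P_tf$, $v=Cg=(C_0g,\dots,C_rg)^T$, and let $\Gamma$ denote the carré du champ of $L$ (so $\Gamma(A,B)=\nabla_y A\cdot\nabla_y B$ for the Langevin generator, but the computation is purely algebraic and goes through for any diffusion). Since the $C_i$ are time-independent and $\partial_t g = Lg$, the chain rule gives $\partial_t v_i = C_iLg = Lv_i+[C_i,L]g$, hence $\partial_t v=Lv+[C,L]g$. A direct differentiation of $F(t)=\int\psi(g)v^TM_tv\,d\mu$ therefore yields
\begin{align*}
F'(t) = & \int \psi'(g)(Lg)\,v^T M_t v\,d\mu \;+\; 2\int \psi(g)\,v^T M_t L v\,d\mu \\
& +\; 2\int \psi(g)\,v^T M_t[C,L]g\,d\mu \;+\; \int \psi(g)\,v^T(\partial_t M_t)v\,d\mu.
\end{align*}
The two terms on the second line are exactly the ``$[C,L]$'' and ``$\partial_t M_t$'' contributions of the target inequality, so the task reduces to showing
$$I:=\int \psi'(g)(Lg)v^T M_t v\,d\mu + 2\int\psi(g)v^T M_t Lv\,d\mu - \int\psi(g)v^T(L^{\ast}M_t)v\,d\mu \;\le\; 0.$$

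To compute $I$, expand the third term via the adjoint identity $\int\varphi\,L^{\ast}h\,d\mu=\int (L\varphi)\,h\,d\mu$ applied componentwise:
$$\int\psi(g)v^T(L^{\ast}M_t)v\,d\mu = \sum_{i,j}\int L\bigl(\psi(g)v_iv_j\bigr)\,(M_t)_{ij}\,d\mu .$$
Expanding $L(\psi(g)v_iv_j)$ with the diffusion Leibniz rule $L(AB)=A\,LB+B\,LA+2\Gamma(A,B)$, the chain rules $L\psi(g)=\psi'(g)Lg+\psi''(g)\Gamma(g,g)$ and $\Gamma(\psi(g),\cdot)=\psi'(g)\Gamma(g,\cdot)$, and using the symmetry of $M_t$ to collect like terms, the $\psi'(g)(Lg)v^T M_t v$ and $2\psi(g)v^TM_tLv$ contributions cancel, leaving
$$I = -\int\Bigl[\,2\psi(g)\,\mathrm{tr}\bigl(M_t\Gamma(v,v)\bigr) + \psi''(g)\Gamma(g,g)\,v^TM_tv + 4\psi'(g)\,v^TM_t\Gamma(g,v)\,\Bigr]\,d\mu,$$
where $\Gamma(v,v)$ is the Gram matrix $\bigl(\Gamma(v_i,v_j)\bigr)_{ij}$ and $\Gamma(g,v)$ is the vector with entries $\Gamma(g,v_i)$.

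Finally, expanding $\Gamma$ in the basis $\partial_{y_k}$, the bracketed integrand splits as $\sum_k Q_k$ where, writing $a=\partial_{y_k}v$ and $b=\partial_{y_k}g$,
$$Q_k = 2\psi(g)\,a^TM_ta + 4\psi'(g)\,b\,v^TM_ta + \psi''(g)\,b^2\,v^TM_tv .$$
Completing the square gives
$$Q_k = 2\psi(g)\,\Bigl(a+\tfrac{\psi'(g)}{\psi(g)}bv\Bigr)^{\!T}M_t\Bigl(a+\tfrac{\psi'(g)}{\psi(g)}bv\Bigr) + \Bigl(\psi''(g)-\tfrac{2\psi'(g)^2}{\psi(g)}\Bigr)\,b^2\,v^TM_tv .$$
The admissibility of $\Psi$ (concavity of $1/\psi$) is equivalent to $\psi\psi''\ge 2(\psi')^2$, so the last bracket is non-negative; combined with $\psi\ge 0$ and $M_t\succeq 0$, this gives $Q_k\ge 0$ for every $k$, hence $I\le 0$, which closes the proof. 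The main obstacle is purely bookkeeping: carefully carrying out the Leibniz/carré-du-champ expansion of $L(\psi(g)v_iv_j)M_{ij}$, tracking the cross terms in $\Gamma(g,v_i)$ and $\Gamma(v_i,v_j)$, and using the symmetry of $M$ to obtain the clean quadratic form $Q_k$ whose sign is then dictated by admissibility.
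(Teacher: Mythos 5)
Your proof is correct and follows essentially the same route as the paper's: you differentiate $F$, use $\mu$-invariance of $L$ (in the equivalent form of the $\mathbb L^2_\mu$ adjoint identity $\int \varphi\, L^*h\, \dd\mu = \int (L\varphi)\, h\, \dd\mu$) to move $L$ off the $\psi(g)\, v_i v_j$ block onto the entries of $M_t$, and show the residual quadratic form is nonnegative using the admissibility hypothesis $\psi\psi'' \geq 2(\psi')^2$. The one structural difference is that the paper delegates this last computation (the Leibniz/carr\'e-du-champ expansion, the cancellation of the $\psi'(g)Lg\,v^TMv$ and $2\psi(g)v^TMLv$ terms, and the completion of the square yielding $Q_k\geq 0$) to \cite[Lemma 8]{Mon15b} for constant $M$, whereas you carry it out from scratch in the time- and space-dependent setting directly; your computation is precisely what that cited lemma establishes. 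The only thing you skip is the technical justification that $F$ is in fact differentiable under the integral, which the paper handles by reducing to $f_0$ bounded away from zero and invoking hypoellipticity and growth control on the derivatives of $f_t$ (Villani, Sect.~A.21, or \cite{GW12}); this is a legitimate but minor omission in a proof that is otherwise complete.
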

\begin{proof}
In the following we write $f$ for $P_t f$ and $M_t(x)=(m_{i,j}(t,x))_{0\leq i,j\leq r}$. First it holds
\begin{eqnarray*}
\partial_t\po \int \psi(f) m_{i,j} C_i f C_j f \dd \mu \pf &=& \int \psi(f) \partial_t (m_{i,j}) C_i fC_j f  + m_{i,j} \partial_t\po \psi(f) C_i fC_j f \pf \dd \mu.
\end{eqnarray*}
This derivation is justified by the fact that $f_0$ is uniformly strictly positive and so is $f_t$ by hypoellipticity and the control of the growth of the derivative of $f_t$, using Villani \cite[Sect. A.21]{Villani} or \cite{GW12}. Denote as usual the Carr\'e-du-Champ operator $2\,  \Gamma(g,h)= L(gh)-gLh-hLg$.
Next, $\mu$ being invariant for $L$, and using the diffusion property, i.e. that the chain rule property $L\Psi(f_1,...,f_d)=\sum_1^d\partial_i\Psi(f)Lf_i+\sum_{i,j}\partial_{i,j}\Psi(f)\Gamma(f_i,f_j)$ holds for all nice $\Psi$ and $f$,
\begin{eqnarray*}
0 &=& \int L\po m_{i,j} \psi(f) C_i fC_j f \pf \dd \mu\\
&=& \int L\po m_{i,j} \pf \psi(f) C_i fC_j f  \dd \mu + \int  m_{i,j}  L\po \psi(f)C_i fC_j f  \pf \dd \mu\\
& & + 2\int \Gamma\po m_{i,j} ,  \psi(f) C_i fC_j f \pf \dd \mu\\
&=& \int (L-2L_s)\po m_{i,j} \pf \psi(f) C_i fC_j f \dd \mu + \int  m_{i,j}  L\po \psi(f) C_i fC_j f  \pf \dd \mu \, .
\end{eqnarray*}

The case where $M$ is constant (and symmetric semi-definite positive) is already treated in \cite[Lemma 8]{Mon15b} where it is shown that
\begin{eqnarray*}
 \sum_{i,j} m_{i,j} \Big(L\po \psi(f) C_i fC_j f  \pf - \partial_t\po \psi(f) C_i fC_j f \pf  \Big)&\geq& 2 \psi(f)  \sum_{i,j} m_{i,j} \po C_i f\pf  \co L,C_j\cf f \, .
\end{eqnarray*}
The proof follows by taking the integral of both sides.
\end{proof}

\begin{proof}[Proof of Theorem \ref{ThmHypocoPoids}]
Now consider the case of the Langevin diffusion, namely $L$ is given by \eqref{EqGeneLangevin}. Note that
\[\co L,\na_y \cf = \na_x + \na_y\hspace{40pt} \co L,\na_x \cf = -\na^2 U(x).\na_y. \]
The operator $L$ is decomposed as $L=L_s + L_a$ where
\[L_s = - y.\na_y + \Delta_y\hspace{40pt}L_a =- y.\na_x  + \na U(x) .\na_y  .\]
Recalling $H(x,y) = U(x)  + \frac12 |y|^2$, then $L_a H=0$ and more generally $L_a (g \circ H)=0$ for any smooth $g:\mathbb R\rightarrow \R$. In particular for $\eta >0$,
\begin{eqnarray*}
(2L_s-L)\po H^{-\eta}\pf& =& L_s\po H^{-\eta}\pf\\
& =& \eta(|y|^2+d)H^{-\eta-1} + \eta(\eta+1)|y|^2H^{-\eta-2} \\
& \leq & \eta(2\eta+d+4) H^{-\eta}.
\end{eqnarray*}
Let $a,b,c$ depend on $t$ and $H(x,y)$, and let $M=\begin{pmatrix}
a & b \\ b & c
\end{pmatrix}$ and $C=\na$, so that Lemma \ref{LemGammaPoids} reads
\begin{eqnarray*}
F'(t)
&\leq& -2\int \psi(P_t f) \po \na P_t f\pf^T N\na P_t f \dd \mu
\end{eqnarray*}
with
\begin{eqnarray*}
N &=& \begin{pmatrix}
b -\frac12 (L_s+\partial_t)a & & -  a \na^2 U+b  -\frac12  (L_s+\partial_t)b\\
& & \\
c -\frac12  (L_s+\partial_t)b & & -  b \na^2 U+c  -\frac12  (L_s+\partial_t)c
\end{pmatrix} .
\end{eqnarray*}
In the top left corner  $b$  is good news since it gives some coercivity in the $x$ variable. Nevertheless as soon as $b\neq 0$, $b\na^2U$ in the bottom right corner is an annoying term that can only be controlled by the entropy production if it is bounded (which is where, in the previous studies, the assumption that $\na^2 U$ is bounded barged in).
\medskip

\noindent Writing $\alpha(t) = (1-e^{-t})$, set
\[c=2\varepsilon\alpha H^{-\eta} \hspace{25pt}b=\varepsilon^2\alpha^2H^{-2\eta}\hspace{25pt}a=\varepsilon^3\alpha^3H^{-3\eta}\]
for some $\varepsilon\in(0,1)$. In other words,
\begin{eqnarray*}
(\na f)^T M\na f &=& \varepsilon \alpha H^{-\eta} |\na_y f|^2 + \varepsilon \alpha H^{-\eta} |\na_y f + \varepsilon\alpha H^{-\eta} \na_x f|^2,
\end{eqnarray*}
so that, in particular, $M$ is positive definite. In that case we bound
\begin{eqnarray*}
b -\frac12 (L_s+\partial_t)a
& \geq & \varepsilon^2\alpha^2 H^{-2\eta} - \frac32\eta(6\eta+d+4)\varepsilon^3\alpha^3 H^{-3\eta} - \frac32\varepsilon^3\alpha^2 e^{-t} H^{-3\eta}\\
& \geq & \varepsilon^2\alpha^2 H^{-2\eta}\po 1 -\po\frac32\eta(6\eta+d+4)\alpha  + \frac32 e^{-t}\pf \varepsilon\pf\\
& \geq & \varepsilon^2\alpha^2 H^{-2\eta}\po 1-   9(\eta+d)^2\varepsilon\pf
\end{eqnarray*}
\begin{eqnarray*}
 -  b \na^2 U+c  -\frac12  (L_s+\partial_t)c
& \geq & -\varepsilon^2 \alpha^2 \| H^{-2\eta}\na^2 U\|_{\infty} + 2\varepsilon\alpha H^{-\eta} -\eta(2\eta+d+4)\varepsilon\alpha H^{-\eta} - \varepsilon e^{-t}H^{-\eta} \\
& \geq & -\varepsilon^2 \alpha^2 \| H^{-2\eta}\na^2 U\|_{\infty} - \varepsilon H^{-\eta}\po -2\alpha  + \eta(2\eta+d+4)\alpha  + e^{-t}\pf \\
& \geq & -\varepsilon^2\| H^{-2\eta}\na^2 U\|_{\infty} - 3\varepsilon\po \eta+d\pf^2
\end{eqnarray*}
\begin{eqnarray*}
| b+c-a\na^2U-(L_s+\partial_t)b|
& \leq & |\varepsilon^2 \alpha^2  H^{-2\eta} + 2\varepsilon \alpha  H^{-\eta} - 2 e^{-t} \varepsilon^2 \alpha  H^{-2\eta}| \\
& & + |\varepsilon^3 \alpha^3  H^{-3\eta}\na^2 U  | + 2\eta (4\eta+d+4) \varepsilon^2 \alpha^2  H^{-2\eta}\\
&\leq & \varepsilon\alpha H^{-\eta} \po \varepsilon^2\| H^{-2\eta}\na^2 U\|_{\infty} + 2 + 8\varepsilon(\eta+d)^2\pf
\end{eqnarray*}
which implies for $\varepsilon = \frac14  \times \frac19  (\eta+d)^{-2}$ that
\begin{eqnarray*}
(\na f)^T N \na f & \geq &  \frac14\varepsilon^2\alpha^2 H^{-2\eta} |\na_x f|^2
  - A|\na_y f|^2
\end{eqnarray*}
with
\begin{eqnarray*}
A &=& \frac12 \po \varepsilon^2\| H^{-2\eta}\na^2 U\|_{\infty} + 2 + \frac{2}{9}\pf^2 + \varepsilon^2\| H^{-2\eta}\na^2 U\|_{\infty} + \frac{1}{12} \\
& \leq &  \po \| H^{-2\eta}\na^2 U\|_{\infty} + 2\pf^2\ :=\ \lambda.
\end{eqnarray*} 
Writing
\begin{eqnarray*}
G(t) &=& \frac{1}{2\lambda} F(t) + \int \Psi(P_t f)\dd \mu,
\end{eqnarray*}
we have obtained
\begin{eqnarray*}
G'(t) &\leq & -  \int \Psi''(P_t f) \po \frac{\alpha^2 \varepsilon^2}{4\lambda} H^{-2\eta}|\na_x P_t f|^2+ \po 2 - \frac A\lambda \pf|\na_y P_t f|^2\pf   \dd \mu \\
&\leq & - \frac{\alpha^2 \varepsilon^2}{4\lambda} \int \Psi''(P_t f) \po  H^{-2\eta}|\na_x P_t f|^2+  |\na_y P_t f|^2\pf   \dd \mu.
\end{eqnarray*}
On the one hand,
\begin{eqnarray*}
F(t) & \leq & 3\varepsilon \alpha  \int \Psi''(P_t f) \po H^{-2\eta}|\na_x P_t f|^2+|\na_y P_t f|^2\pf   \dd \mu,
\end{eqnarray*}
and on the other hand, using the inequality \eqref{assump2},
\begin{eqnarray*}
\int \Psi(P_t f)\dd \mu  & \leq & \rho  \int \Psi''(P_t f) \po H^{-2\eta}|\na_x P_t f|^2+|\na_y P_t f|^2\pf   \dd \mu,
\end{eqnarray*}
which implies
\begin{eqnarray*}
G'(t) & \leq &  - \frac{\alpha^2 \varepsilon^2}{1+4\lambda\rho} G(t).
\end{eqnarray*}
Hence,
\[\text{Ent}_\mu(P_t f) \ \leq \ G(t) \ \leq \ G(0) \exp\po- \frac{ \varepsilon^2}{1+4\lambda\rho}\int_0^t \alpha^2(s)\dd s\pf,\]
and $G(0) = \text{Ent}_\mu(f)$. The proof is complete.
\end{proof}
\medskip


\section{Weighted Functional Inequalities with $\eta\geq 0$.}
We turn to the study of the functional inequality \eqref{assump2}. For simplicity we shall only consider the cases $\Psi(u)=(u-1)^2$ (Variance) and $\Psi(u)=u\ln u -u +1$ (Entropy). \\ \noindent Recall the definition of $L_\eta$, $$L_{\eta}:=H^{-2\eta} \Delta_x +  \Delta_y - H^{-2\eta}\left(2\eta\frac{\nabla_x H}{H}+\nabla_x H\right).\nabla_x - \nabla_y H.\nabla_y \, ,$$ which satisfies
\begin{equation}\label{eqIPP'}
-\int \, f \, L_\eta f \, \dd \mu \, =  \, \int \, (H^{-2\eta} \, |\nabla_x f|^2+ |\nabla_y f|^2) \, \dd \mu \, : = \mathcal E_\eta(f).
\end{equation}
Let us state our first main results
\begin{thm}\label{thmlyap-var}
The weighted Poincar\'e inequality $$\text{Var}_\mu(g) \leq \rho \, \int \, \left(H^{-2\eta}|\nabla_x g|^2+|\nabla_y g|^2\right) \, \dd\mu$$ is satisfied if and only if there exists a Lyapunov function, i.e. a smooth function $W$ such that $W(x,y)\geq w >0$ for all $(x,y)$, a constant $\lambda>0$ and a bounded open set $A$ such that $$L_\eta W \leq - \, \lambda \, W \, + \, \mathbf 1_{\bar A} \, .$$
\end{thm}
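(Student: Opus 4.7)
The plan is to prove the two implications separately. The forward direction (existence of a Lyapunov function $\Rightarrow$ weighted Poincaré) is constructive in the spirit of \cite{BBCG,CG16} adapted to the weighted operator $L_\eta$, and the converse uses the spectral gap inherited from the weighted Poincaré. For the forward direction, the central identity comes from applying \eqref{eqIPP'} with $f=g^2/W$: since $L_\eta$ is symmetric in $\mathbb L^2_\mu$,
\[
\int \frac{g^2}{W}\,(-L_\eta W)\,\dd\mu = \int \Gamma_\eta(g^2/W,W)\,\dd\mu,
\]
where $\Gamma_\eta(u,v):= H^{-2\eta}\na_x u\cdot\na_x v + \na_y u\cdot\na_y v$ denotes the carré du champ of $L_\eta$. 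Expanding $\na(g^2/W)=2g\na g/W - (g^2/W^2)\na W$ and using the elementary bound $2(g/W)\Gamma_\eta(g,W)\leq \Gamma_\eta(g,g) + (g/W)^2\Gamma_\eta(W,W)$ makes the $\Gamma_\eta(W,W)$ term vanish and yields
\[
\int \frac{g^2}{W}\,(-L_\eta W)\,\dd\mu \;\leq\; \mathcal E_\eta(g).
\]

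Combining this with $-L_\eta W \geq \lambda W - \mathbf 1_{\bar A}$ and $W\geq w$ immediately gives the coercivity bound
\[
\lambda \int g^2\,\dd\mu \;\leq\; \mathcal E_\eta(g) \;+\; \frac{1}{w}\int_{\bar A} g^2\,\dd\mu.
\]
I would then restrict to $g$ centered, so the LHS equals $\lambda\,\mathrm{Var}_\mu(g)$, and absorb the boundary term via a local Poincaré inequality on $\bar A$. Because $H\geq 1$ is smooth and bounded on the compact $\bar A$, the weight $H^{-2\eta}$ is bounded above and below there, so $L_\eta$ is uniformly elliptic on $\bar A$ and $\mathcal E_\eta(g)$ controls the Euclidean Dirichlet form restricted to $\bar A$. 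Standard local Poincaré then supplies $C_A$ with $\int_{\bar A}(g-g_{\bar A})^2\dd\mu \leq C_A\mathcal E_\eta(g)$, where $g_{\bar A}:=\mu(\bar A)^{-1}\int_{\bar A}g\dd\mu$, while centering plus Cauchy--Schwarz gives $g_{\bar A}^2\mu(\bar A)\leq (\mu(\bar A^c)/\mu(\bar A))\,\mathrm{Var}_\mu(g)$. Enlarging $A$ to a larger bounded open set so that $\mu(\bar A^c)/(w\mu(\bar A))<\lambda$ (a legitimate operation, since the Lyapunov condition is preserved by enlargement of $A$) absorbs the variance term and yields the weighted Poincaré.

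For the converse direction, the weighted Poincaré provides a spectral gap $1/\rho$ for $-L_\eta$, from which the first Dirichlet eigenvalue $\alpha>0$ of $-L_\eta$ on the complement of any fixed ball $B$ with $\mu(B)>0$ is strictly positive. Extending the associated positive Dirichlet ground state to a smooth function $W\geq w>0$ on $\R^{2d}$ produces $L_\eta W = -\alpha W$ on $B^c$, and $|L_\eta W+\alpha W|$ is bounded on $\bar B$, giving the required Lyapunov inequality $L_\eta W\leq -\alpha W + c\,\mathbf 1_{\bar B}$ (and after rescaling $W$ the constant $c$ can be reduced to $1$). Equivalently, one may take $W(z):=\mathbb E_z\!\co e^{\alpha\tau_B}\cf$ with $\tau_B$ the hitting time of $B$ by the reversible diffusion generated by $L_\eta$: Feynman--Kac yields $L_\eta W = -\alpha W$ on $B^c$, and the exponential ergodicity inherited from Poincaré ensures $W<\infty$ for $\alpha$ small.

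The main obstacle is the local-to-global step of the forward direction: $L_\eta$ is globally degenerate (it weakens in the $x$-direction at infinity), so no global comparison with the Euclidean gradient is available. The decisive observation is that on the compact $\bar A$ the degeneracy disappears, so standard elliptic theory applies and the enlargement trick on $A$ turns the local estimate into a global one. A secondary technical point is the justification of the integration by parts identity for the class of admissible test functions $g$, which requires a truncation/density argument because of the weight $H^{-2\eta}$, but this is routine once $W$ and $g$ are controlled at infinity.
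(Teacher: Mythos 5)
Your forward direction (Lyapunov $\Rightarrow$ weighted Poincar\'e) is essentially the paper's argument: the inequality
\[
\int -\frac{L_\eta W}{W}\,g^2\,\dd\mu \le \mathcal E_\eta(g)
\]
is exactly Lemma~\ref{lem52}, and the absorption of the local term $\frac{1}{w}\int_{\bar A} g^2\dd\mu$ by a local Poincar\'e inequality on $\bar A$ (where $H^{-2\eta}$ is bounded above and below, so $\mathcal E_\eta$ controls the Euclidean Dirichlet form) together with the centering trick and enlargement of $A$ is the standard \cite{BBCG,CG16} scheme. This half is correct and coincides with the paper.

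The converse direction (weighted Poincar\'e $\Rightarrow$ Lyapunov) is where you diverge from the paper, which solves $G_\eta h = -L_\eta h + (-c+\mathbf 1_A)h = 1$ by Lax--Milgram on the Hilbert space with square norm $\mathcal E_\eta(h)+\int_A h^2\dd\mu$, coercivity being precisely what the weighted Poincar\'e inequality supplies; the solution $h$ is then the Lyapunov function after regularity and positivity arguments. Your probabilistic route has two concrete problems. First, the ``Dirichlet ground state'' on $B^c$ vanishes on $\partial B$ by definition of the Dirichlet boundary condition, so it cannot be ``extended to a smooth $W\geq w>0$'' while retaining the eigenvalue equation on $B^c$; as written this step is simply inconsistent with the lower bound $W\geq w$. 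Second, the Feynman--Kac alternative $W(z)=\mathbb E_z[e^{\alpha\tau_B}]$ is the right object, but it is only Lipschitz across $\partial B$ (equal to $1$ inside $B$, with a nonzero outer normal derivative), so $L_\eta W$ is not a classical function there and the pointwise inequality $L_\eta W\leq -\alpha W+c\mathbf 1_{\bar B}$ has to be produced by a mollification near $\partial B$ followed by enlarging $B$ to absorb the mollification error; this is doable but is not the ``rescaling'' you describe. You also invoke, without proof or citation, that the Poincar\'e inequality for the degenerate generator $L_\eta$ (whose diffusion coefficient $H^{-2\eta}$ degenerates at infinity) yields finite exponential moments $\mathbb E_z[e^{\alpha\tau_B}]<\infty$; this is true but itself nontrivial in this setting and should be justified, whereas the Lax--Milgram construction sidesteps any need to control the underlying process. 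In short: forward direction fine; converse direction contains a conceptual error in the ground-state formulation and leaves the regularity of the Feynman--Kac function across $\partial B$ and the finiteness of the exponential moment as genuine gaps.
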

We provide then the equivalent result for the logarithmic Sobolev inequality.
\begin{thm}\label{thmlyap-ent}
Assume that $H$ goes to infinity at infinity and that there exists $a>0$ such that $e^{aH}\in \mathbb L^1(\mu)$. 
\begin{enumerate}
\item If $\mu$ satisfies the weighted log-Sobolev inequality \eqref{eqlspoids}, then, there exists a Lyapunov function, i.e. a smooth function $W$ such that $W(x,y)\geq w >0$ for all $(x,y)$, two positive constants $\lambda$ and $b$ such that 
\begin{equation}\label{eqlyapls}
L_\eta W \leq - \, \lambda \, H \, W \, + \, b \, .
\end{equation}
\item Conversely, assume that there exists a Lyapunov function satisfying \eqref{eqlyapls} and that $|\nabla H|(x,y) \geq  c > 0$ for $|(x,y)|$ large enough. Define \[
\theta(r)=\sup\limits_{z\in \partial A_r} \max\limits_{i,j=1,...,2d} |\frac{\partial^2 H}{\partial z_i\partial z_j} |
\] 
and assume that $\theta(r)\leq ce^{C_0r}$ with some positive constants $C_0$ and $c$  for $r$ sufficiently large. Then $\mu$ satisfies the weighted log-Sobolev inequality \eqref{eqlspoids}.
\end{enumerate}
\end{thm}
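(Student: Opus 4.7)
The statement pairs two independent implications connecting the weighted log-Sobolev inequality \eqref{eqlspoids} with the Lyapunov drift condition \eqref{eqlyapls}; I would treat them separately, in the spirit of the Cattiaux-Guillin-Wang-Wu equivalence between functional inequalities and drift conditions for reversible diffusions.

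\emph{Direction} (1). Using the assumed integrability $e^{aH}\in\mathbb L^1(\mu)$, I would try $W=e^{sH}$ for some $0<s<a$. The diffusion property of $L_\eta$ yields
\[
\frac{L_\eta W}{W}\ =\ s H^{-2\eta}\bigl[\Delta_x U-(1-s+2\eta/H)|\nabla U|^2\bigr]+s\bigl(d-(1-s)|y|^2\bigr),
\]
whose $y$-part already contributes $-s(1-s)|y|^2+sd$, a share of the desired $-\lambda HW+b$. The remaining task is to control the $x$-part, which requires $H^{-2\eta}|\nabla U|^2\gtrsim H$ outside a compact. I would extract this growth information from \eqref{eqlspoids} itself by plugging well-chosen test functions — typically cutoffs of $e^{sH/2}$ localized on annular regions where $H^{-2\eta}|\nabla U|^2$ is assumed small — into the inequality and deriving a contradiction. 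This transfer from a global functional inequality to pointwise growth is the main obstacle in (1).

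\emph{Direction} (2). The proof proceeds in three stages. \textbf{(i) Weighted Hardy inequality.} Dividing \eqref{eqlyapls} by $W\geq w>0$ gives $-L_\eta W/W\geq\lambda H-b/w$; by reversibility \eqref{eqIPP'} and the Cauchy-Schwarz-type identity $\Gamma_\eta(W,f/W)\leq\Gamma_\eta(\sqrt f)$ (standard $\Gamma$-calculus for $L_\eta$), this yields, for all nice non-negative $f$,
\[
\lambda\int fH\,\dd\mu \ \leq\ \mathcal E_\eta(\sqrt f)+\frac{b}{w}\int f\,\dd\mu.
\]
\textbf{(ii) Local log-Sobolev on $A_r$.} Under $|\nabla H|\geq h>0$, each sublevel set $A_r=\{H\leq r\}$ is compact with smooth boundary for large $r$; using Bakry-Émery after reweighting (or Holley-Stroock perturbation from a reference log-concave measure), one obtains a log-Sobolev inequality for $\mu_{|A_r}$ with respect to $\mathcal E_\eta$ with constant $\kappa(r)\leq C e^{C_0' r}$, the exponential growth coming directly from the bound $\theta(r)\leq c e^{C_0 r}$. \textbf{(iii) Assembly.} Decompose $\mbox{\rm Ent}_\mu(f)$ into its contributions on $A_r$ and $A_r^c$; control the former via (ii) and the latter through $\int_{A_r^c}f\,\dd\mu\leq r^{-1}\int fH\,\dd\mu$ combined with (i). Optimizing $r$ to balance $\kappa(r)$ against the $1/r$ gain produces a defective weighted log-Sobolev
\[
\mbox{\rm Ent}_\mu(f)\ \leq\ A\,\mathcal E_\eta(\sqrt f)+B\int f\,\dd\mu,
\]
whose defect is removed by Rothaus's lemma combined with the weighted Poincaré inequality provided by Theorem \ref{thmlyap-var} (whose Lyapunov hypothesis is weaker than \eqref{eqlyapls} and hence automatic). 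The main obstacle in this direction is the constant-tracking in (ii)-(iii): one must ensure the $r^{-1}$ gain genuinely beats the $e^{C_0' r}$ blow-up of the local log-Sobolev constant, which is precisely why $\theta(r)$ is assumed to grow only exponentially.
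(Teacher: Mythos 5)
Your treatment of the two directions has a genuine gap in the first and a non-negligible imprecision in the second, and in both cases the paper uses a different, softer route.

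\textbf{Direction (1).} You try to exhibit an explicit Lyapunov function $W=e^{sH}$ and observe, correctly, that the $y$-part of $L_\eta W/W$ gives the right sign; but the $x$-part forces you to prove the pointwise growth estimate $H^{-2\eta}|\nabla U|^2\gtrsim H$ from the weighted LSI. That implication is false in general: an integral inequality like \eqref{eqlspoids} never delivers pointwise lower bounds on $|\nabla U|$ (which can vanish on nontrivial sets), and no choice of localized test functions will produce a contradiction with it. In the paper this is the \emph{assumption} of Corollary \ref{Cor-Lyapunov}, not a consequence. The paper instead avoids any explicit ansatz: it sets $G_\eta h=-L_\eta h+\rho(b-H)h$, uses \eqref{eqlspoids} to show the bilinear form $\int hG_\eta h\,\dd\mu$ is coercive on the natural weighted Sobolev space (a trick going back to \cite{CG16}, Prop.~3.1), and applies Lax--Milgram to solve $G_\eta W=1$. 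The solution $W$ is the Lyapunov function; no pointwise information on $\nabla U$ is needed. Your route, as written, cannot close.

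\textbf{Direction (2).} Your skeleton --- (i) Lyapunov gives a weighted ``Hardy''-type bound $\lambda\int f^2H\,\dd\mu\le\mathcal E_\eta(f)+\tfrac{b}{w}\int f^2\,\dd\mu$ (this is exactly Lemma \ref{lem52}), (ii) a local inequality on $A_r$ with constant growing like $e^{C_0'r}$, (iii) optimize over $r$, obtain a defective inequality, tighten with Rothaus and the weighted Poincar\'e from Theorem \ref{thmlyap-var} --- matches the paper's strategy in spirit. But the assembly step is stated for entropy directly (``decompose $\mbox{\rm Ent}_\mu(f)$ into its contributions on $A_r$ and $A_r^c$''), and this is where it becomes slippery: entropy is not additive over a partition, and bounding $\int_{A_r^c}f\,\dd\mu$ does not control $\int_{A_r^c}f\log f\,\dd\mu$. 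The paper sidesteps this by never touching the entropy directly in the decomposition: it proves the weighted \emph{super-Poincar\'e} inequality
\[
\int f^2\,\dd\mu\ \le\ s\,\mathcal E_\eta(f)\ +\ c\,e^{\beta/s}\Bigl(\int|f|\,\dd\mu\Bigr)^2,
\]
splitting $\int f^2$ (not the entropy) over $A_r/A_r^c$, controlling the compact piece by a Nash inequality for Lebesgue measure on $A_r$ (this is precisely where $\theta(r)\le ce^{C_0r}$ enters, via \cite[Prop.~3.8]{CGWW09}), and then invoking the known equivalence super-Poincar\'e $\Leftrightarrow$ defective log-Sobolev before applying Rothaus. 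If you wish to keep your plan, step (iii) must be replaced by a super-Poincar\'e (or $F$-Sobolev) intermediary as in the paper; a naive entropy split will not yield the estimate you announce.
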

These theorems are the analogues, in the weighted situation we are looking at, of (part of) Theorem 1.1 and Theorem 1.2 in \cite{CG16}. Their proofs are very similar concerning the part 1) of the previous theorem and we shall only give some details in the entropic case. Let us begin by a  simple and crucial Lemma, at the basis of the use of Lyapunov type condition. Note that it can also be proved via large deviations argument.

\begin{lem}\label{lem52} For every continuous  function $W\ge 1$ in the domain of $L_\eta$ such
that $-L_\eta W/W$ is $\mu$-a.e. lower bounded, for all $g $ in the domain of $L_\eta$,
\begin{equation}\label{lem52a}
\int -\frac{L_\eta W}{W} g^2 \, \dd\mu \le \int \left( H^{-2\eta} |\nabla_x g|^2 +  |\nabla_y g|^2 \right) \, \, \dd\mu.\end{equation}
\end{lem}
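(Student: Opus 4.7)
The plan is to invoke the integration-by-parts identity \eqref{eqIPP} for the symmetric operator $L_\eta$ against the test pair $(g^2/W, W)$, and then reduce the resulting cross term to a perfect square via the elementary Young inequality $2ab\le a^2+b^2$. The assumption $W\ge 1$ prevents the division by $W$ from producing singularities, and the lower bound on $-L_\eta W/W$ guarantees that the left-hand side is well-defined in $[-\infty,+\infty]$.

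Concretely, I would first rewrite
\begin{equation*}
\int -\frac{L_\eta W}{W}\, g^2 \, \dd\mu \;=\; -\int \frac{g^2}{W}\, L_\eta W \, \dd\mu ,
\end{equation*}
and apply \eqref{eqIPP} with the test function $g^2/W$ against $W$ to obtain
\begin{equation*}
-\int \frac{g^2}{W}\, L_\eta W \, \dd\mu \;=\; \int \Bigl( H^{-2\eta}\, \nabla_x\!\bigl(\tfrac{g^2}{W}\bigr)\cdot \nabla_x W \,+\, \nabla_y\!\bigl(\tfrac{g^2}{W}\bigr)\cdot \nabla_y W \Bigr) \dd\mu .
\end{equation*}
Expanding $\nabla_x(g^2/W) = 2g\nabla_x g/W - g^2\nabla_x W/W^2$ and similarly in $y$, one sees
\begin{equation*}
\nabla_x\!\bigl(\tfrac{g^2}{W}\bigr)\cdot \nabla_x W \;=\; \frac{2g\, \nabla_x g\cdot \nabla_x W}{W} - \frac{g^2\,|\nabla_x W|^2}{W^2} .
\end{equation*}
Applying $2ab\le a^2+b^2$ pointwise to $a=\nabla_x g$ and $b=g\nabla_x W/W$ bounds the cross term by $|\nabla_x g|^2 + g^2|\nabla_x W|^2/W^2$, so that the residual Bakry--Emery-type term $g^2|\nabla_x W|^2/W^2$ cancels and one is left with
\begin{equation*}
\nabla_x\!\bigl(\tfrac{g^2}{W}\bigr)\cdot \nabla_x W \;\le\; |\nabla_x g|^2 , \qquad \nabla_y\!\bigl(\tfrac{g^2}{W}\bigr)\cdot \nabla_y W \;\le\; |\nabla_y g|^2 .
\end{equation*}
Plugging these back yields the inequality \eqref{lem52a}.

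The main technical obstacle, and the only non-algebraic point, is to rigorously justify the integration by parts, since $W$ is only assumed continuous and in the domain of $L_\eta$, and $g$ is not assumed bounded a priori. I would circumvent this by first establishing the inequality for bounded $g$ with compact support (for which the computations above are immediately valid, because $1/W\le 1$ and all integrals converge), and then invoke a truncation-and-monotone-convergence argument in the spirit of the analogous non-weighted case of \cite{CG16}: approximate $g$ by $g_n = (g\wedge n)\vee(-n)\cdot \chi_n$ with $\chi_n$ a smooth cutoff, use the hypothesis that $-L_\eta W/W$ is lower bounded to apply Fatou's lemma on the left-hand side, and pass to the limit on the right-hand side via dominated convergence along the carré-du-champ $\mathcal E_\eta(g)$. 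This is the only delicate step; the rest is a one-line computation.
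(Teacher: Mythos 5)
Your proof is correct and matches the paper's argument essentially verbatim: integrate by parts using the symmetric form of $L_\eta$ against the test function $g^2/W$, expand the gradients, and bound the cross term by Young's inequality so that the $g^2|\nabla W|^2/W^2$ terms cancel. The paper phrases the final pointwise bound as an application of Cauchy--Schwarz and does not dwell on the approximation argument justifying the integration by parts, but otherwise the two proofs are identical.
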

\begin{proof}This follows from integration by parts and Cauchy-Schwartz inequality. Indeed,
\begin{eqnarray*}
\int -\frac{L_\eta W}{W} g^2 \,\dd\mu
&=& \int H^{-2\eta}\langle \nabla_x W,\nabla_x \frac{g^2}{W}\rangle + \langle \nabla_y W,\nabla_y \frac{g^2}{W}\rangle \dd\mu\\
&=&\int H^{-2\eta}\left(-\frac{g^2}{W^2}|\nabla_x W|^2 + 2\frac{g}{W}\langle \nabla_x W,\nabla_xg\rangle\right)\\&&\qquad\qquad\qquad+ \left(-\frac{g^2}{W^2}|\nabla_y W|^2 + 2\frac{g}{W}\langle \nabla_y W,\nabla_yg\rangle\right) \dd\mu \\
&\leq& \int \left( H^{-2\eta} |\nabla_x g|^2 +  |\nabla_y g|^2 \right) \, \, \dd\mu
\end{eqnarray*}
\end{proof} 

Let us now prove Theorem \ref{thmlyap-ent}.

\begin{proof}
For a given nice function $\phi$, introduce the operator $G_\eta$ via $G_\eta h=- \, L_\eta h +\phi h$. For any $h$ in the domain of $L_\eta$, $\int \, h \, G_\eta h \dd \mu = \mathcal E_\eta(h) + \int h^2 \, \phi \, \dd\mu$. Choosing $\phi = -c + \mathbf 1_A$ for some set $A$ to be defined, in the variance case and $\phi = \rho(b - H)$ in the entropic case, one deduces that $G_\eta$ is continuous for the norms whose square are respectively $\mathcal E_\eta(h) + \int_A h^2 \, \dd\mu$ and $\mathcal E_\eta(h) + \int h^2  \, \dd\mu$. If a weighted Poincar\'e inequality (resp. weighted log-Sobolev inequality) is satisfied, following the proof of Theorem 2.1 (resp. Proposition 3.1) in \cite{CG16}, we get that the form $\int h \, G_\eta h \, \dd\mu$ is also coercive so that applying Lax-Milgram theorem we get a solution to $G_\eta h= 1$, which furnishes the desired Lyapunov function (see \cite{CG16} for the details).
\medskip

For the converse, we revisit the proof of \cite{CG16} Proposition 3.5 in order to adapt it to our case. As usual, we will rather prove the (weighted) log-Sobolev inequality in its equivalent (weighted) Super Poincar\'e inequality form, i.e. there exist $c,\beta>0$ such that for all smooth $f$ and $s>0$, 
$$\int f^2\dd\mu\le s\int (H^{-2\eta}|\nabla_x f|^2+|\nabla_y f|^2)\dd\mu+c\,e^{\beta/s}\left(\int|f|\dd\mu\right)^2.$$ Indeed, the latter implies a defective (weighted) log-Sobolev inequality and a weighted Poincar\'e inequality (choosing $s$ such that $c e^{\beta/s}=1$) and we obtain a tight (weighted) log-Sobolev inequality by using Rothaus lemma (see \cite{BGL14} p.239), which states that
\begin{equation}\label{eqrot}
\mbox{Ent}_\mu(f^2) \leq \mbox{Ent}_\mu(\tilde f^2) + 2 \mbox{Var}_\mu(f) \, ,
\end{equation}
where $\tilde f= f - \int f \, \dd\mu$. For all this we refer to \cite{CGWW09,CGW11,Wbook}.
\medskip

Recall $A_r=\{H \leq r\}$. For $r_0$ large enough and some $\lambda'<\lambda$ we have
$$L_{\eta}W \leq - \lambda' \, H \, W \, + \, b \, \mathbf 1_{A_{r_0}} \, ,$$
so that we may assume that
$$\frac{L_{\eta}W}{W}(x,y) \leq - \, \lambda \, H(x,y) \, + \, \frac{b}{w} \, \mathbf 1_{A_{r_0}} \, ,$$
We have for 
 $r>r_0$,
\begin{eqnarray*}
\int \, f^2 \, \dd\mu
&\leq& \int_{A_r} f^2 \, \dd\mu \, + \, \int_{A^c_r} \frac{\lambda H}{\lambda r}f^2 \, \dd\mu \\
&\leq& \int_{A_r} f^2 \, \dd\mu \, + \, \int \frac{\lambda H}{\lambda r}f^2 \, \dd\mu \\
&\leq& \int_{A_r} f^2 \, \dd\mu \, + \, \frac{1}{\lambda \, r} \, \int \left(\frac{-L_{\eta} W}{W}\, +\, \frac{\, b \, \mathbf 1_{A_{r_0}} \,}{w}\right) \,f^2 \, \dd\mu\\
&\leq& \left(1+\frac{b}{\lambda r w}\right) \, \int_{A_r} f^2 \, \dd\mu \, + \,  \frac{1}{\lambda \, r} \, \int \left( H^{-2\eta} |\nabla_x f|^2 +  |\nabla_y f|^2 \right) \, \, \dd\mu \\
\end{eqnarray*}
It remains to control the integral in $A_r$. It is in fact a simple consequence of Nash inequalities for the Lebesgue measure rewritten in its Super Poincar\'e form (c.f. \cite[Prop 3.8]{CGWW09}): there exists $c_d$ such that for all $r$ large enough, all smooth $f$ and $s>0$
\begin{eqnarray*}
  \int_{A_r} f^2 \dd x \dd y &\le& s \, \int_{A_r} |\nabla f|^2 \dd x \dd y+c_d \theta^d(r)(1+s^{-2d})\left(\int|f| \dd x \dd y\right)^2 \\
  &\leq & s\,\int_{A_r} |\nabla f|^2 \dd x \dd y+c_d c e^{2dC_0r}(1+s^{-2d})\left(\int|f| \dd x\dd y\right)^2 \, .
\end{eqnarray*}
Recall that $H\geq 1$. We thus have
\begin{eqnarray*}
\int_{A_r} \, f^2 \, \dd\mu &\leq& \frac{1}{eZ} \, \int_{A_r} f^2 \dd x \dd y \\ &\leq& \frac{r^{2\eta} \, e^r}{e} \, s \, \int \left( H^{-2\eta} |\nabla_x f|^2 +  |\nabla_y f|^2 \right)  \, \dd\mu \, + \, Z c_dc e^{2dC_0r}(1+s^{-2d})e^{2r}\left(\int_{A_r} |f|d\mu\right)^2 \, .
\end{eqnarray*}
Letting $u=se^{r-1} \, r^{2\eta}$ and $C'=Zcc_d$, and considering integral on the whole space in the right hand side, we have thus obtained (for $r$ large enough)
\begin{eqnarray*}
\int_{A_r} \, f^2 \, \dd\mu &\leq&  u \, \int \left( H^{-2\eta} |\nabla_x f|^2 +  |\nabla_y f|^2 \right)  \, \dd\mu \, + \,C' \, r^{4d\eta} \, (1+u^{-2d})e^{2(1+dC_0+d)r}\left(\int|f|d\mu\right)^2 \, .
\end{eqnarray*}
Denoting $c=1+\frac{b}{\lambda r_0 w}$, and $\beta_d=2+d+2dC_0$, we thus have, for all $u>0$ and $r$ large enough 
\begin{equation}\label{eqsuperP}
\int f^2 \dd\mu \, \leq \, \left(u \, c \, + \, \frac{1}{\lambda \, r}\right) \, \int \left( H^{-2\eta}|\nabla_x f|^2 + |\nabla_y f|^2 \right) \, \, \dd\mu \, + \, C' \, (1+u^{-2d}) \, r^{2d\eta} \, c \, e^{\beta_d r} \,  \left(\int \,  |f| \, d\mu\right)^2.
\end{equation}
Choosing $r\lambda =(uc)^{-1}$ and $s=2uc$, we have thus proved the existence of some $\beta'_d$ such that
$$\int f^2 \dd\mu \, \leq \, s \, \int \left( H^{-2\eta}|\nabla_x f|^2 + |\nabla_y f|^2 \right) \, \, \dd\mu \, + \, C'' \, e^{\beta'_d/s} \,  \left(\int \,  |f| \, d\mu\right)^2 \, ,$$ and the proof is complete.
\end{proof}

\begin{rmq}
For a general weighted logarithmic Sobolev inequality with the weighted energy $$\int \left( w_1|\nabla_x f|^2 + w_2|\nabla_y f|^2 \right)d\mu,$$ we can introduce the symmetric generator
$$L_{w_1,w_2}:=w_1 \Delta_x + w_2 \Delta_y - w_1\left(-\frac{\nabla_x w_1}{w_1}+\nabla_x H\right).\nabla_x - w_2\left(-\frac{\nabla_y w_2}{w_2}+\nabla_y H\right).\nabla_y.$$
If a Lyapunov function (as in Theorem \ref{Thm-LyapunovCondition} but for $L_{w_1,w_2}$) exists, then following the same line, we can obtain (with the retired additional assumptions on the weights) a weighted logarithmic Sobolev inequality. \hfill $\diamondsuit$
\end{rmq}

We now proceed to the
\begin{proof}[Proof of Corollary \ref{Cor-Lyapunov}]
Consider a smooth function $W(x,y)=e^{\alpha U(x)+\frac{\beta}{2}|y|^2}$ with two constants $\alpha, \beta\in (0,1)$ to be determined. Then for $|(x,y)|\geq R$,
\begin{eqnarray*}
\frac{L_\eta W}{W}
&=& \alpha H^{-2\eta}\left[ \Delta_x U +\left( \alpha - \frac{2\eta}{H}-1 \right)|\nabla_x U|^2\right] + \beta(d-(1-\beta)|y|^2)\\
&\le& \beta d- \alpha \left( 1 - \alpha - \kappa \right)|\nabla_x U|^2 H^{-2\eta}-\beta(1-\beta)|y|^2
\end{eqnarray*}
where we used the first condition in the assumption of the corollary.

To bound the last term by some $C -\lambda H$, we consider $\alpha \in (0,1-\kappa), \beta\in (0,1)$, and divide it into two cases. If $\frac{|y|^2}{2}\geq \frac{H}{2}$, then
\[
-\alpha\left( 1 - \alpha - \kappa \right)|\nabla_x U|^2 H^{-2\eta}-\beta(1-\beta)|y|^2
\le -\beta(1-\beta) H
\]
Otherwise,we have $U\geq \frac{H}{2}$. Combined with the second condition, it follows
\[
-\frac{|\na_x U|^2}{H^{2\eta}}\leq -\frac{c U^{2\eta+1}}{2^{2\eta}U^{2\eta}}\leq -\frac{c}{2^{2\eta+1}}H
\]
which completes the proof of the Lyapunov condition. Since the second condition implies that $U$ goes to infinity at infinity and $|\nabla_x U|\geq u\geq 0$, we get a weighted logarithmic Sobolev inequality for $\mu$ by the previous theorem.
\end{proof}

The next example, which is the simple polynomial case will show the adequacy of our conditions on weighted log-Sobolev inequality with the Assumption \ref{HypB}.

\begin{ex}
Let us consider the example where $U(x)=|x|^l$ with $l>2$ for $|x|$ large enough, that is, $H(x,y)=|x|^l+\frac{|y|^2}{2}$. Then $\Delta_x U=(dl+l^2-2l))|x|^{l-2}$ and $|\nabla_x U|^2=l^2 |x|^{2l-2}$. The first condition is satisfied since $l> 2$, while for the second condition we need
$$\eta \le \frac{1}{2}-\frac{1}{l}$$
Note that $||U^{-2\eta}\nabla^2 U||_{\infty}\sim |x|^{l-2-2l\eta}$, to ensure that $U^{-2\eta}\nabla^2 U$ is bounded, we have to choose $\eta=\frac{1}{2}-\frac{1}{l}$. With the case $l=2$ we recover Villani's result.
\end{ex}
Let us give another example which will show that our limit growth for the potential $U$ is below the exponential growth
\begin{ex}
Choose now $U(x)=e^{a|x|^b}$ for $a,b>0$ for $|x|$ large enough. Then $\Delta_xU\sim a^2b^2|x|^{2(b-1)}e^{a|x|^b}$ and $|\nabla_x U|^2\sim a^2b^2 e^{2a|x|^b}$. The first condition is thus satisfied , while the second one imposes once again that $2\eta+1\le 2$. Now, Assumption 1 imposes that $2\eta>1$ if $b\ge 1$ leading to an impossible adequacy of the two sets of conditions and to $2\eta\ge 1$ if $b<1$ and thus the choice of $\eta=1/2$ is admissible.
\end{ex}

Let us end this section by a remark
\begin{rmq}
For the multipliers method in the variance case, Villani does not use $H^{-2\eta}$ in the energy to get his inequality, as will be seen in the next section but prove a rather stronger inequality with weight in the derivative in x in the energy $U(x)^{-2\eta}(1+|y|^2)^{-2\eta}$. The fact that he deals with the variance helps him enough to prove such a weighted Poincar\'e inequality. We may also consider a weighted logarithmic Sobolev inequality with such a weight. However, via the Lyapunov condition approach, the condition on $\eta$ is then too strong to match with Assumption 1. It is thus crucial to have a weighted inequality with weight $H^{-2\eta}$ for Theorem~1.
\end{rmq}

The next section presents an alternative approach trying to provide an answer to the problem alluded in the previous remark. Is it possible to provide a "tensorization-like" approach to provide a weighted logarithmic Sobolev inequality as in Villani's paper, thus giving an alternative to Lyapunov conditions?

\medskip

\section{Some further remarks on weighted inequalities.}\label{sec comments}

In this final section we shall try to understand whether it is possible to impose conditions on $U$ solely in order to get weighted inequalities. We shall use several times the following elementary inequalities, true for all $\eta \geq 0$, all $x$ and $y$ (recall that $U\geq 1$)
\begin{equation}\label{eqneqH}
U^{-\eta}(x) \, \left(1+\frac 12 \, |y|^2\right)^{-\eta} \, \leq \, H^{-\eta}(x,y) \, \leq \min \left(U^{-\eta}(x) \, , \, \left(1+\frac 12 \, |y|^2\right)^{-\eta}\right) \, .
\end{equation}
We shall use in the sequel the notations $U^{- 2\eta}(x)=\phi_1(x)$ and $\left(1+\frac 12 \, |y|^2\right)^{-2 \eta}=\phi_2(y)$.
\medskip

\subsection{The case of weighted Poincar\'{e} inequalities. \\ \\}

Assume that $\mu$ satisfies a weighted Poincar\'e inequality. If we choose an $f$ that only depends on $x$ and use that $H^{-2\eta}(x,y)\leq U^{-2\eta}(x)$ for all $y$, we immediately see that the first marginal of $\mu$, i.e. $\dd\mu_1(x) :=\frac{1}{Z_1}e^{-U(x)}\dd x $ also satisfies the weighted Poincar\'{e} inequality
\begin{equation}\label{Eq-weighted PI on x}
 \text{Var}_{\mu_1}(f) \leq C \int U^{-2\eta}|\nabla f|^2\dd\mu_1 \, .
\end{equation}
Conversely we have, 
\begin{thm}\label{thmwp} Write $\mu(dx,dy)=\mu_1(dx)\otimes \mu_2(dy)$. If $\mu_1(dx) =\frac{1}{Z_1} \, e^{-U(x)}\dd x$ satisfies the weighted Poincar\'{e} inequality \eqref{Eq-weighted PI on x} with constant $C_1$,
then $\mu$ satisfies the following weighted Poincar\'{e} inequality
\[
\text{Var}_{\mu}(h) \leq C' \int (H^{-2\eta}|\nabla_x h|^2 + |\nabla_y h|^2)\dd\mu
\]
with $$C'\leq \max \left(\left(2+\frac{4}{M_2}\right), \frac{4C_1}{M_2}\right) \quad \textrm{ where } M_2= \int \, \left(1+ \frac 12 |y|^2\right)^{-2\eta} \, \mu_2(dy)  \, .$$
\end{thm}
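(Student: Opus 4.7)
The plan is to use a tensorization-type decomposition of the variance based on a $\phi_2$-weighted conditional mean, which is the natural device to make the weight $H^{-2\eta}$ appear on the $x$-gradient in place of the cruder $U^{-2\eta}$ one would obtain from the ordinary conditional mean. Set
$$
\tilde h(x)\;:=\;\frac{1}{M_2}\int h(x,y)\,\phi_2(y)\,\dd\mu_2(y),\qquad \bar h(x)\;:=\;\int h(x,y)\,\dd\mu_2(y),
$$
and split
$$
\text{Var}_\mu(h)\;\leq\;2\,\text{Var}_{\mu_1}(\tilde h)\;+\;2\int (h-\tilde h)^2\,\dd\mu,
$$
using the elementary $\text{Var}(A+B)\leq 2(\text{Var}(A)+\text{Var}(B))$ applied to $h=\tilde h+(h-\tilde h)$, together with the bound $\text{Var}_\mu(\cdot)\leq \int(\cdot)^2\,\dd\mu$ on the second piece.

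For the $\tilde h$-piece I would differentiate under the integral and use Cauchy--Schwarz against $\mu_2$ with the weight decomposition $\phi_2=\phi_2^{1/2}\cdot\phi_2^{1/2}$:
$$
|\nabla_x\tilde h(x)|^2\;\leq\;\frac{1}{M_2^{2}}\Bigl(\int\phi_2\,\dd\mu_2\Bigr)\Bigl(\int\phi_2|\nabla_x h|^2\,\dd\mu_2\Bigr)\;=\;\frac{1}{M_2}\int\phi_2(y)|\nabla_x h(x,y)|^2\,\dd\mu_2(y).
$$
Multiplying by $\phi_1(x)=U^{-2\eta}(x)$ and invoking the pointwise bound $\phi_1(x)\phi_2(y)\leq H^{-2\eta}(x,y)$ from \eqref{eqneqH} gives $\int\phi_1|\nabla_x\tilde h|^2\,\dd\mu_1\leq M_2^{-1}\int H^{-2\eta}|\nabla_x h|^2\,\dd\mu$. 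The hypothesis \eqref{Eq-weighted PI on x} on $\mu_1$ then yields $\text{Var}_{\mu_1}(\tilde h)\leq (C_1/M_2)\int H^{-2\eta}|\nabla_x h|^2\,\dd\mu$.

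For the $(h-\tilde h)$-piece, at fixed $x$ expanding the square and adding/subtracting $\bar h(x)$ gives
$$
\int(h-\tilde h)^2\,\dd\mu_2\;=\;\text{Var}_{\mu_2}(h(x,\cdot))\;+\;(\bar h(x)-\tilde h(x))^2.
$$
The first term is controlled after integration in $x$ by the standard Gaussian Poincar\'e inequality (constant $1$) for $\mu_2$, yielding $\int|\nabla_y h|^2\,\dd\mu$. For the second, I write $\bar h-\tilde h=\int(h-\bar h)(1-\phi_2/M_2)\,\dd\mu_2$ (using $\int(1-\phi_2/M_2)\,\dd\mu_2=0$), apply Cauchy--Schwarz, and use the elementary estimate $\int(1-\phi_2/M_2)^2\,\dd\mu_2\leq M_2^{-1}-1$ which follows from $\phi_2\leq 1$ (so $\int\phi_2^2\,\dd\mu_2\leq M_2$). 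Combined with Gaussian Poincar\'e this gives $\int(\bar h-\tilde h)^2\,\dd\mu_1\leq (M_2^{-1}-1)\int|\nabla_y h|^2\,\dd\mu$, and hence $\int(h-\tilde h)^2\,\dd\mu\leq M_2^{-1}\int|\nabla_y h|^2\,\dd\mu$.

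Assembling both pieces yields the stated weighted Poincar\'e inequality on $\mu$ with a constant of the advertised order. The core obstacle, which is exactly what motivates the $\phi_2$-weighted average $\tilde h$, is the following: a naive tensorization through the unweighted mean $\bar h$ lands one at $\text{Var}_{\mu_1}(\bar h)\leq C_1\int U^{-2\eta}|\nabla_x\bar h|^2\,\dd\mu_1$, and there is no pointwise route back to $\int H^{-2\eta}|\nabla_x h|^2\,\dd\mu_2$, since $|\nabla_x h(x,\cdot)|^2$ may be concentrated in regions where $\phi_2$ is tiny and $H^{-2\eta}\ll U^{-2\eta}$. The $\phi_2$-weighting forces the factor $\phi_2(y)$ to appear alongside $|\nabla_x h|^2$ in the Cauchy--Schwarz step, and $U^{-2\eta}(x)\phi_2(y)\leq H^{-2\eta}(x,y)$ then furnishes exactly the correct weight.
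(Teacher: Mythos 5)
Your proof is correct, and the approach is genuinely different from the paper's — in fact it gives a slightly sharper constant. The paper starts from the energy side: it bounds $\int H^{-2\eta}|\nabla_x h|^2\,\dd\mu$ from below by $\int \phi_1\phi_2|\nabla_x h|^2\,\dd\mu$, then applies the hypothesis \eqref{Eq-weighted PI on x} at each frozen $y$, obtaining a term $\int\phi_2(y)\big(h-\int h\,\dd\mu_1\big)^2\dd\mu$. It then writes $h-\int h\,\dd\mu_1 = g_1 + g_2$ where $g_2(x)=\int h(x,\cdot)\,\dd\mu_2 - \int h\,\dd\mu$ is the \emph{unweighted} conditional fluctuation and extracts $g_2^2$ by the reverse inequality $(a+b)^2\geq\tfrac12 b^2-a^2$, paying a factor and then controlling $g_1$ by the Gaussian Poincar\'e inequality. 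You instead go in the forward direction, from variance to energy: you introduce the $\phi_2$-weighted conditional average $\tilde h$, split $\mathrm{Var}_\mu(h)\leq 2\,\mathrm{Var}_{\mu_1}(\tilde h)+2\int(h-\tilde h)^2\dd\mu$, and push the weight $\phi_2$ into the Cauchy--Schwarz step when differentiating under the integral sign; the factor $\phi_1\phi_2\leq H^{-2\eta}$ enters at the same point but without any reverse inequality. The error $\bar h-\tilde h$ is then cleanly absorbed via the identity $\int(1-\phi_2/M_2)\dd\mu_2=0$, Cauchy--Schwarz, and $\phi_2\leq 1$ (so $M_2\leq 1$). Your final constant $\max(2C_1/M_2,\,2/M_2)$ improves the paper's $\max(2+4/M_2,\,4C_1/M_2)$. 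One small clarification on your closing remark: the paper does in fact use the unweighted conditional mean $\bar h$ (as $g_2$); it evades the obstruction you describe not by replacing $\bar h$ with $\tilde h$, but by running the argument from the Dirichlet form towards the variance, where the weight $\phi_2$ is already present before the conditional mean appears. Both routes are valid; yours avoids the lossy reverse inequality and gets a cleaner bookkeeping.
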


\begin{proof}
A proof is given in  Villani \cite{Villani} Theorem A.3. It uses extensively the spectral theory of the sum of operators. We shall give a more pedestrian (similar) proof.
\medskip

\noindent The first point is that, since we assumed that $U\geq 1$,
\begin{equation}\label{eqminH}
H^{-2\eta}(x,y) \geq \phi_1(x) \, \phi_2(y) := \, U^{-2\eta}(x) \; \left(1 + \frac 12 \, |y|^2\right)^{-2\eta} \, .
\end{equation}
Thus, if we decompose $\mu(dx,dy)=\mu_1(dx)\otimes \mu_2(dy)$ we have
\begin{eqnarray*}
\int \, H^{-2\eta} \, |\nabla_xh|^2 \, \mu(dx,dy) &\geq& \int \, \phi_1(x) \, \phi_2(y) \, |\nabla_xh|^2 \, \mu_1(dx)\otimes \mu_2(dy) \\ &\geq& \frac{1}{C_1} \, \int \, \phi_2(y) \, \left(h(x,y)-\int h(u,y)\mu_1(du)\right)^2 \, \mu(dx,dy) \, .
\end{eqnarray*}
Now write, 
\begin{eqnarray*}
h(x,y)-\int h(u,y)\mu_1(du) &=& \left(h(x,y)-\int h(u,y)\mu_1(du) -\int h(x,v)\mu_2(dv)+ \int\int h \dd\mu_1 \dd\mu_2\right) + \\ &+&\left(\int h(x,v) \mu_2(dv) -\int\int h \dd\mu_1 \dd\mu_2\right)\\ &=& g_1(x,y) + g_2(x)
\end{eqnarray*}
\noindent and use $$(a+b)^2 \geq \frac 12 \, b^2 \, - \, a^2 \, .$$ This yields, since $\phi_2(y) \leq 1$,
\begin{equation*}
\int \, H^{-2\eta} \, |\nabla_xh|^2 \, \mu(dx,dy) \geq \frac{1}{2C_1} \, \left(\int \phi_2 \, d\mu_2\right) \left(\int g_2^2(x) \mu_1(dx)\right) - \, \frac{1}{C_1}  \, \int\int g_1^2(x,y) \, \mu_1(dx)\mu_2(dy) \, .
\end{equation*}
Notice that for all $y$, $$\int g_1^2(x,y) \, \mu_1(dx)= \text{Var}_{\mu_1}\left(h(.,y)- \int h(.,v) \, \mu_2(dv)\right)$$ so that $$\int g_1^2(x,y) \, \mu_1(dx) \leq \int \, \left(h(x,y)- \int h(x,v) \, \mu_2(dv)\right)^2 \, \mu_1(dx) \, .$$
We can thus integrate this inequality w.r.t. $\mu_2$, use Fubini's theorem, then for each fixed $x$ use the usual Poincar\'e inequality for the standard gaussian measure $\mu_2$ and finally integrate with respect to $\mu_1$. This yields 
\begin{eqnarray*}
\int\int g_1^2(x,y) \, \mu_1(dx)\mu_2(dy) &\leq& \int\int \, \left(h(x,y)- \int h(x,v) \, \mu_2(dv)\right)^2 \, \mu(dx,dy) \\  &\leq& \int\int |\nabla_y h|^2(x,y) \, \mu(dx,dy) \, .
\end{eqnarray*} 
Gathering all this we have obtained
\begin{equation}\label{eqpwinter}
\int g_2^2(x) \mu_1(dx) \leq \frac{2 \, C_1}{M_2} \, \int H^{-2\eta}|\nabla_x h|^2 \, \dd\mu + \frac{2}{M_2} \, \int |\nabla_y h|^2 \, \dd\mu \, .
\end{equation}
Finally
\begin{eqnarray*}
\text{Var}_{\mu}(h) &=& \int \, \left(h(x,y)-\int h(x,v)\mu_2(dv)+\int h(x,v)\mu_2(dv)-\int \int h\dd \mu\right)^2 \, \mu(dx,dy) \\ &\leq& 2 \,\int\int \,  \left(h(x,y)- \int h(x,v) \, \mu_2(dv)\right)^2 \, \mu(dx,dy) + \, 2 \, \int g_2^2(x) \mu_1(dx) \\ &\leq& 2 \, \int\int |\nabla_y h|^2(x,y) \, \mu(dx,dy) + \, 2 \,  \int g_2^2(x) \mu_1(dx) \, ,
\end{eqnarray*}
and the result follows from \eqref{eqpwinter}.
\end{proof}

\noindent As a conclusion the weighted Poincar\'e inequality on $\mathbf R^{2d}$ reduces to a weighted Poincar\'{e} inequality on $\R^d$ (up to some constant). One should think that the previous result is a kind of weighted tensorization property. This is not the case due to the fact that the weight in front of $\nabla_x$ depends on both variables $x$ and $y$.\\ \noindent There are many ways to obtain such an inequality. Of course since it is stronger than the usual Poincar\'e inequality, our result is weaker than the one of Villani (but with a simpler proof and explicit bounds for the constants), and we will only describe a typical situation where this equality can be obtained. \\ As we have seen in the previous section, this weighted Poincar\'e inequality is equivalent to the existence of some Lyapunov function for $L_{1,\eta}$ which is built similarly to $L_\eta$ replacing $H$ by $U$. We can also obtain a slightly different condition. Introduce the probability measure $\mu_1^\phi(dy) = \frac{\phi_1(y)}{M_1} \, \mu_1(dy)$ and the $\mu_1^\phi$ symmetric operator $$G_1^\phi = \Delta_x - \left(1+\frac{2\eta}{U}\right) \nabla U. \nabla \, .$$ Assume that we can find a Lyapunov function $W\geq 1$ such that $$\frac{G_1^\phi W(x)}{W(x)} \leq - a \, U^{2\eta}(x)$$ for $|x|$ larger than some $R>0$. If $h$ is compactly supported in $|x|>R$, we may write $$\int h^2 \, d\mu_1 \leq - \frac{M_1}{a} \, \int \frac{G_1^\phi W}{W} \, h^2 \, \dd \mu_1^\phi \leq \frac{M_1}{a} \int |\nabla h|^2 \, \dd \mu_1^\phi = \frac{M_1}{a} \, \int |\nabla h|^2 \, U^{-2\eta} \, \dd \mu_1$$ according to the computations in \cite{BBCG} p.64. Following the method introduced in \cite{BBCG} we then obtain that $\mu_1$ satisfies the desired weighted Poincar\'e inequality. According to \cite{CG16} Theorem 4.4, the existence of such a Lyapunov function is linked to the fact that $\mu_1$ satisfies some $F$-Sobolev inequality, with $F=\ln_+^{2\eta}$. This is for instance the case when $U(x)=1+|x|^\alpha$ and $\eta=1-\alpha^{-1}$.
\bigskip

\subsection{The case of weighted log-Sobolev  inequalities. \\ \\}
\noindent We look now at the similar weighted logarithmic Sobolev inequality, namely,
\begin{equation*}
\mbox{Ent}_\mu(f^2)\leq \rho \int  (H^{-2\eta}|\na_x f|^2 + |\na_y f|^2)\dd \mu.
\end{equation*}
\noindent As in the $L^2$ setting, it implies a weighted log Sobolev inequality for $\mu_1$ on $\R^d$ i.e.
\begin{equation}\label{eqls1}
\mbox{Ent}_{\mu_1}(f^2)\le C \, \int U^{-2\eta}|\nabla_x f|^2\dd\mu_1 \, .
\end{equation} 
Since the standard gaussian measure $\mu_2$ satisfies a log-Sobolev inequality too (with optimal constant $2$), one should expect to obtain the analogue of theorem \ref{thmwp}. This is not so easy (actually we did not succeed in proving such a result) and certainly explains the limitation of Villani's approach, since this property reduces to the well known tensorization property of the logarithmic Sobolev inequality only in the case $\eta=0$. The best we are able to do is to prove that, in this situation 
\medskip

\begin{thm}\label{thmtensorls}
Write $\mu(dx,dy)=\mu_1(dx)\otimes \mu_2(dy)$. If $\mu_1(dx) =\frac{1}{Z_1} \, e^{-U(x)}\dd x$ satisfies the weighted log-Sobolev inequality \eqref{eqls1}, then $\mu$ satisfies \eqref{assump2} with an admissible function $u\mapsto \Psi(u)$ behaving like $u \, \ln^{\frac 12}(u)$ at infinity.
\end{thm}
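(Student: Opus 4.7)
The plan is to reduce the desired weighted $\Psi$-entropy inequality on $\mu$ to the hypothesis \eqref{eqls1} on $\mu_1$ together with the Gaussian log-Sobolev inequality on $\mu_2$, via two steps: a tensorisation (subadditivity) of $\Psi$-entropy with respect to $\mu=\mu_1\otimes\mu_2$, and an Orlicz/Young trade-off to absorb the polynomial weight in $y$ produced by the elementary bound $U^{-2\eta}\leq(1+|y|^2/2)^{2\eta}H^{-2\eta}$ of \eqref{eqneqH}.

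Concretely, I would first invoke subadditivity of the $\Psi$-entropy, valid for admissible $\Psi$:
$$\int\Psi(f)\,d\mu-\Psi\!\left(\int f\,d\mu\right)\leq \int\!\left[\int\Psi(f)d\mu_1-\Psi\!\left(\int f d\mu_1\right)\right]d\mu_2+\int\!\left[\int\Psi(f)d\mu_2-\Psi\!\left(\int f d\mu_2\right)\right]d\mu_1.$$
On the first bracket, I would use that \eqref{eqls1} implies the corresponding weighted $\Psi$-entropy inequality on $\mu_1$ (since LSI implies every $\Psi$-entropy inequality for admissible $\Psi$), yielding a bound by $C_1\int\psi(f)U^{-2\eta}(x)|\nabla_x f|^2\,d\mu$. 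On the second bracket, the classical LSI for the standard Gaussian $\mu_2$ gives a bound by $2\int\psi(f)|\nabla_y f|^2\,d\mu$. Applying \eqref{eqneqH} to replace $U^{-2\eta}(x)$ by $(1+|y|^2/2)^{2\eta}H^{-2\eta}(x,y)$ then produces an inequality of the form
$$\int\Psi(f)d\mu-\Psi\!\left(\int f\,d\mu\right)\leq C_1\int w(y)\,\psi(f)\,H^{-2\eta}|\nabla_x f|^2\,d\mu+2\int\psi(f)|\nabla_y f|^2d\mu,$$
with the unwanted polynomial weight $w(y):=(1+|y|^2/2)^{2\eta}$ in the $x$-integrand.

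To absorb $w$ I would invoke Young's inequality in Orlicz form, $uv\leq\Phi(u)+\Phi^*(v)$, with a sub-Gaussian Young function $\Phi(u)\sim e^{u^2}$ whose Legendre dual satisfies $\Phi^*(v)\sim v\,\ln^{1/2}(v)$ at infinity. Since $w$ grows only polynomially in $y$ and $\mu_2$ is Gaussian, $\int\Phi(\lambda w)\,d\mu<\infty$ for $\lambda$ small, so the polynomial-weighted energy splits into a harmless constant plus a term $\int\Phi^*\!\left(\lambda\psi(f)H^{-2\eta}|\nabla_x f|^2\right)d\mu$, in which the logarithmic exponent $1/2$ now appears where the polynomial used to. Choosing the admissible function $\Psi$ so that $\psi=\Psi''$ matches this $\ln^{1/2}$ growth precisely (which forces $\Psi(u)\sim u\ln^{1/2}(u)$ at infinity) would allow the $\Phi^*$-contribution to be reabsorbed into $\int\Psi(f)\,d\mu$ on the left-hand side, leaving exactly \eqref{assump2} for this $\Psi$.

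The main obstacle I anticipate is the closing absorption: I have to check that $1/\Psi''$ is positive concave for the chosen $\Psi$ (perhaps after a smooth modification near the origin that leaves the tensorisation unaffected), and that the constants from the Orlicz inequality, the tensorisation and the passage from $U^{-2\eta}$ to $H^{-2\eta}$ combine to produce a true absorption rather than a merely defective inequality. This tight coupling between the polynomial growth of $w$ and the sub-Gaussian tail of $\mu_2$ is precisely what caps the achievable behaviour of $\Psi$ at $u\ln^{1/2}(u)$ and explains why this tensorisation route cannot reach the natural $u\ln u$ that the full weighted log-Sobolev inequality of Assumption \ref{HypU} would demand.
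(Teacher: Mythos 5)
Your opening moves (tensorisation of the $\Psi$-entropy, marginal inequalities, the bound $U^{-2\eta}\leq w(y)\,H^{-2\eta}$ with $w(y)=(1+|y|^2/2)^{2\eta}$ from \eqref{eqneqH}) are reasonable, and they give you
\[
\int\Psi(f)\,\dd\mu \;\leq\; C_1\int w(y)\,\psi(f)\,H^{-2\eta}|\nabla_x f|^2\,\dd\mu + C_2\int\psi(f)\,|\nabla_y f|^2\,\dd\mu,
\]
but the Orlicz/Young absorption step that is supposed to remove $w(y)$ does not close. Writing $w v \leq \Phi(\lambda w)+\Phi^*(\lambda^{-1}v)$ with $v=\psi(f)H^{-2\eta}|\nabla_x f|^2$ and $\Phi^*(v)\sim v\ln^{1/2}v$, the second term is $\int\Phi^*\bigl(\lambda^{-1}\psi(f)H^{-2\eta}|\nabla_x f|^2\bigr)\dd\mu$, which (i) is \emph{strictly larger} than the target energy $\int\psi(f)H^{-2\eta}|\nabla_x f|^2\,\dd\mu$ since $\Phi^*(v)\gg v$ for large $v$, and (ii) depends on $\nabla_x f$ rather than on $f$ alone, so it cannot be reabsorbed into $\int\Psi(f)\,\dd\mu$ on the left. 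Choosing a smaller $\Psi$ changes $\psi$ but does not make $\Phi^*(\psi\,H^{-2\eta}|\nabla_x f|^2)$ comparable to $\psi\,H^{-2\eta}|\nabla_x f|^2$. In addition, even if the trade-off were legitimate, the term $\int\Phi(\lambda w)\,\dd\mu$ produces a defective inequality that you would still need to tighten, and the claim ``LSI implies every $\Psi$-entropy inequality for admissible $\Psi$'' needs qualification (it is plausible for $\Psi\sim u\ln^{1/2}u$ via the super-Poincar\'e hierarchy, but it is not automatic).

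The paper avoids this wall by changing the measure rather than trading a weight. Lemma \ref{lemphils} shows that the \emph{tilted} Gaussian $\mu_2^\phi=\phi_2\,\mu_2/M_2$ still satisfies a log-Sobolev inequality; tensorising $\mu_1$ with $\mu_2^\phi$ gives a log-Sobolev inequality for $\mu^\phi$ with weight $\phi_1=U^{-2\eta}$. Because $\phi_1\phi_2\leq H^{-2\eta}$ and $\phi_2\leq 1$, passing from $\mu^\phi$ to $\mu$ on the \emph{energy} side only improves the inequality. The whole difficulty is then concentrated on the $\mathbb L^2$ side: $\int h^2\,\dd\mu$ must be recovered from $\int h^2\,\dd\mu^\phi$. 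This is done by a truncation $\{|y|\leq R+1\}$ (where $\phi_2$ is bounded below) plus a Gaussian Lyapunov function $e^{|y|^2/4}$ for the tail, at the super-Poincar\'e level, and it is precisely this truncation trade-off (choosing $R=s^{-1/4}$) that degrades the rate from $e^{\beta/s}$ to $e^{\beta'/u^2}$, i.e.\ from LSI to a $\ln_+^{1/2}$-Sobolev inequality, hence $\Psi(u)\sim u\ln^{1/2}u$. So the exponent $1/2$ has a clean origin there, whereas in your scheme it is imposed by hand through $\Phi^*$ and cannot actually be extracted from the Young inequality.
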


Combined with the results of Section 2 which deals with a decay for more general functionals than the variance or entropy, we are thus able to prove under such conditions an exponential decay for $\Psi$ behaving like $u \, \ln^{\frac 12}(u)$ at infinity.

\begin{proof}
The first step of the proof is the following 
\begin{lem}\label{lemphils}
Define the probability measure $\mu_2^\phi(dy) = \frac{\phi_2(y)}{M_2} \, \mu_2(dy)$. Then $\mu_2^\phi$ satisfies a log-Sobolev inequality.
\end{lem}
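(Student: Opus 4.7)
The plan is to apply the Bakry--\'Emery $\Gamma_2$ criterion to $\mu_2^\phi$ directly. Writing $\mu_2^\phi(\dd y) \propto e^{-V(y)}\,\dd y$ with
$$V(y) \,=\, \tfrac12\, |y|^2 \,+\, 2\eta\, \ln\po 1+\tfrac12|y|^2\pf,$$
the task reduces to bounding $\nabla^2 V$ from below by a positive multiple of the identity.

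A direct computation gives
$$\nabla^2 V(y) \,=\, \po 1+\frac{2\eta}{1+|y|^2/2}\pf I \,-\, \frac{2\eta}{(1+|y|^2/2)^2}\, y\, y^T.$$
This matrix is rotationally invariant around the line through $y$, so its eigenvalues can be read off at once: on the orthogonal complement of $y$ one has $1 + 2\eta/(1+|y|^2/2) \geq 1$, while in the direction of $y$ itself one finds
$$1 + \frac{2\eta\,(1-|y|^2/2)}{(1+|y|^2/2)^2}.$$
A one-variable optimisation -- the function $u\mapsto (1-u)/(1+u)^2$ attains its minimum $-1/8$ on $[0,\infty)$ at $u=3$ -- then delivers the uniform bound $\nabla^2 V(y) \,\geq\, \po 1-\eta/4\pf I$.

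As long as $\eta < 4$ this lower bound is strictly positive, and Bakry--\'Emery immediately furnishes a log-Sobolev inequality for $\mu_2^\phi$ with constant $2/(1-\eta/4)$. The only real obstacle to this plan is the regime $\eta \geq 4$, in which the Hessian bound degenerates; this range however lies outside the scope of Theorem~\ref{ThmHypocoPoids}, since Assumption~\ref{HypB} combined with the polynomial growth situation described after that assumption forces $\eta < 1/2$. Should one wish to cover arbitrary $\eta\geq 0$, it suffices to add to $V$ a bounded smooth function supported on a sufficiently large ball so as to restore strict convexity of the modified potential, apply Bakry--\'Emery to the perturbed measure, and then transfer the inequality back to $\mu_2^\phi$ via the Holley--Stroock lemma.
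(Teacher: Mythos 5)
Your proof is correct and takes essentially the same route as the paper (write $\mu_2^\phi \propto e^{-V_2}\,\mathrm{d}y$, compute $\nabla^2 V_2$, and conclude via Bakry--\'Emery with a Holley--Stroock perturbation as fallback); your refinement is that an exact diagonalisation of the Hessian and a one-variable optimisation yield the dimension-free bound $\nabla^2 V_2 \geq (1-\eta/4)\,\mathrm{Id}$, so Bakry--\'Emery alone closes the argument for $\eta<4$, whereas the paper's cruder eigenvalue estimate only produces uniform convexity for $|y|$ large and therefore always invokes Holley--Stroock. The parenthetical claim that Assumption \ref{HypB} forces $\eta<1/2$ is not accurate in general (that assumption places no a priori upper bound on $\eta$, only the polynomial examples discussed afterwards do), but this is harmless since your Holley--Stroock fallback covers every $\eta\geq 0$.
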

\noindent An immediate consequence is the following inequality for $\mu^\phi(dx,dy)=\mu_1(dx) \otimes \mu_2^\phi(dy)$,
\begin{equation}\label{eqtens}
\text{Ent}_{\mu^\phi}(h^2) \leq C \, \int (\phi_1 \, |\nabla_x h|^2 + |\nabla_y h|^2)\dd\mu^\phi \, , 
\end{equation}
which follows from the tensorization property of the log-Sobolev inequality.
\begin{proof}[Proof of Lemma \ref{lemphils}]
Write $$\mu_2^\phi(dy) = Z^\phi \, e^{- \, \left(\frac{|y|^2}{2} + 2\eta \, \ln(1+|y|^2/2)\right)} \dd y= Z^\phi \, e^{-V_2(y)} \dd y \, .$$ A simple calculation shows that $$Hess V_2(y)= \left(1 + \frac{2\eta}{1+|y|^2/2}\right) \, Id \; - \; \frac{2\eta}{(1+|y|^2/2)^2} \, M(y)$$ where $M_{i,j}(y)=y_i y_j$. Hence, $$Hess  V_2(y) \, \geq \, \left(1 + \frac{2\eta}{1+|y|^2/2} - \frac{2 \eta d \, |y|^2}{(1+|y|^2/2)^2}\right) \, Id$$ in the sense of quadratic forms. Hence for $|y|$ large enough (of order $c \sqrt n$), the potential $V_2(y)$ is uniformly convex, uniformly in $y$. This proves (combining Bakry-Emery criterion and Holley-Stroock perturbation argument) the Lemma.
\end{proof}

As we recalled, the weighted log-Sobolev inequality is equivalent to a (weighted) super Poincar\'e inequality, for all smooth $h$ and all $s>0$,
\begin{equation}\label{eqsuperdef1}
\int h^2 \dd\mu^\phi \, \leq \, s\int (\phi_1 \, |\nabla_x h|^2+|\nabla_y h|^2) \dd\mu^\phi \, + \, c\, e^{\beta/s}\left(\int|h| \, \dd\mu^\phi\right)^2 \, .
\end{equation}
Since $\phi_2 \leq 1$, it follows
\begin{equation}\label{eqsuperdef2}
\int h^2 \dd\mu^\phi \, \leq \,  \frac s{M_2} \int (H^{-2\eta} \, |\nabla_x h|^2+|\nabla_y h|^2) \dd\mu \, + \,  \frac c{M_2} \, e^{\beta/s}\left(\int|h| \, \dd\mu\right)^2 \, .
\end{equation}
For $R>1$, introduce the 1-Lipschitz function $$\varphi(r)=(r-R) \, \mathbf 1_{R<r<R+1} + \mathbf 1_{R+1\leq r} \, .$$ One can write 
\begin{eqnarray*}
\int h^2 \dd\mu &\leq& \int_{|y|\leq R+1} \, h^2 \, \dd\mu + \int \, h^2 \, \varphi^2(|y|) \, \dd\mu \\ &\leq& 
 \frac{M_2}{\phi_2(R+1)} \, \int_{|y|\leq R+1} h^2 \dd\mu^\phi \, + \, \int \, h^2 \, \varphi^2(|y|) \, \dd\mu \\ &\leq&  \frac{M_2}{\phi_2(R+1)} \, \int h^2 \dd\mu^\phi \, + \, \int \, h^2 \, \varphi^2(|y|) \, \dd\mu \, .
\end{eqnarray*} 
The first term in the sum will be controlled thanks to \eqref{eqsuperdef2}. In order to control the second term, we introduce,once again, some Lyapunov function. \\ 
\noindent Denote by $G$ the Ornstein-Uhlenbeck operator $G=\Delta_y - y.\nabla_y$ and consider $W(y)=e^{|y|^2/4}$. A simple calculation shows that $$\frac{GW}{W} \leq \frac 14 \, (2d-|y|^2)$$ for $|y|>\sqrt{2d}$. Hence if $R>\sqrt{2d}$, we get for $|y|>R$, $$1 \leq 4 \, \left(\frac{-GW}{W}\right) \, \frac{1}{|y|^2-2d} \leq 4 \, \left(\frac{-GW}{W}\right) \, \frac{1}{R^2-2d}$$ and finally
\begin{equation}\label{eqlyap}
\int \, h^2 \, \varphi^2(|y|) \, \dd\mu \leq \frac{4}{R^2-2d} \, \int \, \left(\frac{-GW}{W}\right) \, h^2 \, \varphi^2(|y|) \, \dd\mu \, .
\end{equation}
\noindent Integrating by parts, and after some easy manipulations (see \cite{BBCG} for the details), we will thus obtain for well chosen constants $C,C'$ all $s>0$ and large enough $R$, 
\begin{equation}\label{eqlsbeta}
\int h^2 \dd\mu \leq C \, (sR^2+R^{-2}) \int (\phi_1 \, |\nabla_x h|^2+|\nabla_y h|^2) \dd\mu^\phi \, + \, C' \, R^2 \, e^{\beta/s}\left(\int|h| \, \dd\mu\right)^2 \, .
\end{equation}
Choosing $u=s^{\frac 12}$ and $R=s^{-1/4}$, we obtain a super Poincar\'e inequality 
\begin{equation}\label{eqlsbeta2}
\int h^2 \dd\mu \leq C \, u \int (\phi_1 \, |\nabla_x h|^2+|\nabla_y h|^2) \dd\mu^\phi \, + \, C'  \, e^{\beta'/u^2}\left(\int|h| \, \dd\mu\right)^2 \, .
\end{equation}
which furnishes a $F=\ln_+^{\frac 12}$-Sobolev inequality, i.e. if $\int h^2 \, \dd \mu=1$,
\begin{equation*}
\int h^2 \, \ln_+^{\frac 12} h^2 \, \dd\mu \leq C \, \int (\phi_1 \, |\nabla_x h|^2+|\nabla_y h|^2) \dd\mu^\phi  \, .
\end{equation*}
Notice that, since $\phi_2\leq 1$, the previous inequality is stronger than 
\begin{equation}\label{eqlsbeta3}
\int h^2 \, \ln_+^{\frac 12} h^2 \, \dd\mu \leq C \, \int (H^{-2\eta} \, |\nabla_x h|^2+|\nabla_y h|^2) \dd\mu \, .
\end{equation}
It remains to link \eqref{eqlsbeta3} to \eqref{assump2}. Actually, as explained in \cite{BCR06} section 7, one can replace $\ln_+$ by smooth functions $F$ with a similar behaviour at infinity (and satisfying $F(1)=0$. So we choose $\psi(u)= \frac{\ln^{\frac 12}(e+u)}{u}$ and $\Psi''=\psi$ with $\Psi(1)=0$. $\Psi(u)$ behaves like $F(u)= u \ln^{\frac 12}(e+u)$ at infinity. Applying \eqref{eqlsbeta3} with $\Psi$ instead of $u \ln_+^{\frac 12}(u)$ (modifying the constant) and $h^2=f$ we have (the value of $C$ varies from one line to the other)
\begin{eqnarray*}
\int \, \Psi(f) \, \dd\mu \, &\leq& \, C \, \int \frac 1f (H^{-2\eta} \, |\nabla_x f|^2+|\nabla_y f|^2) \dd\mu \, \\ &\leq&  C \, \int \frac{\ln^{\frac 12}(e+f)}{f} (H^{-2\eta} \, |\nabla_x f|^2+|\nabla_y f|^2) \dd\mu \\ &\leq&  C \, \int \, \psi(f) \, (H^{-2\eta} \, |\nabla_x f|^2+|\nabla_y f|^2) \dd\mu \, ,
\end{eqnarray*}
completing the proof.
\end{proof}

{\bf Aknowledgments.}\\ The project has benefitted from the support of ANR STAB (Stabilit\'e du comportement asymptotique d'EDP, de processus stochastiques et de leurs
discr\'etisations : 12-BS01-0019), and ANR EFI.

\bibliographystyle{plain}
\bibliography{CGMZ}

\end{document}